\definecolor{jeffColor}{RGB}{102, 0, 204}
\definecolor{yaizaColor}{RGB}{0, 153, 153}
\definecolor{certainty}{RGB}{64, 228, 198}
\definecolor{hope}{RGB}{228, 194, 64}
\definecolor{periodColor}{RGB}{255, 167, 105}
\definecolor{dark-green}{RGB}{135, 194, 130}
\tikzset{>=latex} 
\tikzset{font=\small}
\tikzset{mark size=1.5pt, mark options=thin}
\tikzset{pin distance=4pt,
 every pin edge/.style={<-, thin, shorten <= -2pt}}
\definecolor{uipoppy}{RGB}{221,128,71}
\definecolor{uipaleblue2}{RGB}{179,196,215}
\definecolor{uiviolet}{RGB}{86,86,99}
\definecolor{uiblack}{RGB}{0, 0, 0}
\definecolor{azul}{RGB}{0,128,255}
\definecolor{verde}{RGB}{50,180,50}
\definecolor{pale-verde}{RGB}{155,207,145}
\definecolor{uipaleblue}{RGB}{108,199,220}
\definecolor{light-gray}{RGB}{198,198,198}
\newtheorem{lemma}{Lemma}
\newtheorem{theorem}{Theorem}
\numberwithin{theorem}{section}
\newtheorem{corollary}[lemma]{Corollary}
\newtheorem{proposition}[lemma]{Proposition}
\theoremstyle{definition}
\newtheorem{definition}[lemma]{Definition}
\newtheorem{remark}[lemma]{Remark}
\newcommand{\ep}{\varepsilon}
\newcommand{\mc}[1]{\mathcal{#1}}
\newcommand{\WFh}{\operatorname{WF_h}}
\newcommand{\RR}{\mathbb{R}}
\newcommand{\NN}{\mathbb{N}}
\newcommand{\abs}[1]{\ensuremath{\left| #1 \right|}}
\newcommand{\comp}{\operatorname{comp}}
\newcommand{\ang}[1]{{\langle{#1}\rangle}}
\def\XXint#1#2#3{{\setbox0=\hbox{$#1{#2#3}{\int}$} \vcenter{\hbox{$#2#3$}}\kern-.5\wd0}}
\DeclareMathOperator{\supp}{supp}
\newcommand{\e}{\varepsilon}
\numberwithin{equation}{section}
\numberwithin{lemma}{section}
\newcommand{\pa}{\partial}
\renewcommand{\Im}{\operatorname{Im}}
\newcommand{\CI}{\mathcal{C}^\infty}
\newcommand{\Pfac}{\Upsilon}
\newcommand{\Hyp}{\mc{H}}
\newcommand{\Gl}{\mc{G}_2}
\newcommand{\loc}{\operatorname{loc}}
\newcommand{\opt}{\operatorname{Op}^{\mathsf{T}}}
\newcommand{\op}{\operatorname{Op}}
\newcommand{\Psit}{\Psi_{\mathsf{T}}}
\newcommand{\psit}[1]{{}^{#1}\!\Psi_{\mathsf{T}}}
\newcommand{\sigmat}{\sigma_{\mathsf{T}}}
\newcommand{\Lap}{\Delta}
\newcommand{\secondff}{\mathrm{I\!I}}
\def\noqed{\renewcommand{\qedsymbol}{}}
\title[Propagation for singular potentials]{Propagation for Schr\"odinger operators with potentials singular along a hypersurface}
\author{Jeffrey Galkowski}
\address{Department of Mathematics, University College London, London, United Kingdom}
\email{j.galkowski@ucl.ac.uk}
\author{Jared Wunsch}
\address{Department of Mathematics, Northwestern University, Evanston, IL, USA}
\email{jwunsch@math.northwestern.edu}
\begin{document}

\begin{abstract}
In this article, we study propagation of defect measures for Schr\"odinger operators, $-h^2\Delta_g+V$, on a Riemannian manifold $(M,g)$ of dimension $n$ with $V$ having conormal singularities along a hypersurface $Y$ in the sense that derivatives along vector fields tangent to $Y$ preserve the regularity of $V$. We show that the standard propagation theorem holds for bicharacteristics travelling transversally to the surface $Y$ whenever the potential is absolutely continuous. Furthermore, even when bicharacteristics are tangent to $Y$ at exactly first order, as long as the potential has an absolutely continuous first derivative, standard propagation continues to hold.
\end{abstract}
\maketitle

\section{Introduction}

Let $(M,g)$ be a smooth Riemannian manifold and $Y\subset M$ be a smooth hypersurface. Let $V$ be real valued and smooth away from $Y$, with conormal singularities to $Y$ in the sense that derivatives along vector fields tangent to $Y$ preserve the regularity of $V$. In this article, we study propagation of singularities, as measured by semiclassical defect measures, for the Schr\"odinger operator
$$
P:=-h^2\Delta_g+V. 
$$

Let $p=|\xi|_g^2+V$ denote the semiclassical principal symbol of $P$ and $H_p$ the Hamiltonian vector field associated to $p$. Recall that a sequence of functions $u_h$ with $h \downarrow 0$ has a (not necessarily unique) defect measure $\mu$ if along some subsequence $h_j\downarrow 0$
$$
\langle \op(a)u_{h_j},u_{h_j}\rangle_{L^2(M)}\to \int_{T^*M}ad\mu
$$
for all $a\in C_c^\infty(T^*M)$. The standard propagation theorem for defect measures is that, if $V\in C^\infty(M;\mathbb{R})$,
$$
(-h^2\Delta_g+V)u=o(h)_{L^2},\qquad \|u\|_{L^2}=1,
$$
and $u$ has defect measure $\mu$, then $\mu$ is supported in the characteristic set of $p$ and is invariant under the Hamiltonian flow for $p$. On the other hand, if $V$ is not continuous along $Y$ e.g. has a jump singularity along $Y$, it can be shown that a positive proportion of the energy may reflect off of $Y$ (see \cite{Be:82}, \cite[Section 1.2]{GaWu:18}, \cite{DeLa:23}). In contrast, Theorem~\ref{t:hyp} below shows that, as long as $V$ is absolutely continuous at $Y$, there is no reflection along bicharacteristics transverse to $Y$. 

Before stating our results we introduce some classes of conormal potentials.
\begin{definition}
Let $\mathcal{B}$ denote a Banach space of functions on $M$.
We say that $V\in I\mc{B}(Y)$ if for all $k$ and  $X_1,\dots,X_k$ smooth vector fields tangent to $Y$, we have
$$
V\in \mc{B},\qquad X_1\dots X_kV\in \mc{B}.
$$
\end{definition}
In this paper, we will mainly focus on the cases $\mc{B}=W^{k,p}$ for $k=1,2$ and $p=1,\infty$.  Note that we phrase the hypotheses here in terms of conormal spaces, the real hypotheses of our main results can be expressed in a weaker formulation in terms of Fermi normal coordinates with respect to the hypersurface $Y$: Let $(x,y)$ be Fermi normal coordinates near $Y$ with $x$ the signed distance to $Y$. We usually need only that $V$ is smooth outside a Fermi neighborhood of $Y$ and, for $\mc{B}$ a Banach space of functions on $\mathbb{R}$,
$$
V \in \mc{B}(\mathbb{R}_x;C^\infty(\mathbb{R}^{n-1}_y)).
$$
In our initial formulations, we will give the coordinate-invariant versions of our theorems using conormal spaces; the weaker hypotheses under which we in fact prove these results are discussed near the end of this section.

We begin with a discussion of the existence of a bicharacteristic flow with low regularity assumptions.  Here we use heavily the structured nature of the potential; note that there are recent very strong results in the case of unstructured singular coefficients in \cite{BuDeLeRo:22}.

Let $f$ denote a defining function for the hypersurface $Y$, $\pi$ denote the projection map $\pi: T^*M \to M$, and let $\Hyp\subset T^*M$ denote the hyperbolic set
$$
\Hyp = \{p=0\} \cap (\{f\neq 0\} \cup \{H_p \pi^* f \neq 0\}),
$$
containing points off $Y$ as well as those points over $Y$ where the bicharacteristic flow is transverse to $Y$.
Let 
$$
\Gl = \{p=0\} \cap \{f=H_p \pi^* f=0,\ H_p^2\pi^* f \neq 0\}
$$
denote the points over $Y$ where the flow is ``glancing to exactly second order.''
\begin{theorem}\label{theorem:flow}
If $V \in IW^{1,1}$, then through every point in $\Hyp$ there exists a unique maximal integral curve of $H_p$ in $\Hyp$.  If $V \in IW^{2,1}$ then through every point in $\Hyp \cup \Gl$ there exists a unique maximal integral curve of $H_p$ in $\Hyp \cup \Gl$.
\end{theorem}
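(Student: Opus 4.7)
Work in Fermi normal coordinates $(x,y)$ with $x$ the signed distance to $Y$, so that $p=\xi_x^2+k(x,y,\eta)+V(x,y)$ with $k$ smooth and $V$ smooth in $y$ but only $W^{k,1}$ in $x$. Hamilton's equations read
\[
\dot x=2\xi_x,\qquad \dot\xi_x=-\partial_x(k+V),\qquad \dot y=\partial_\eta k,\qquad \dot\eta=-\partial_y(k+V),
\]
and are smooth away from a Fermi neighborhood of $Y$, where Picard--Lindel\"of yields the flow; the analysis therefore reduces to a neighborhood of $Y$.

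For a point of $\Hyp$ lying over $Y$, $\xi_x\ne 0$ and hence $\dot x$ is bounded below in modulus on a neighborhood, so the bicharacteristic is locally a graph over $x$. Parametrizing by $x$, the system for $(y,\eta,\xi_x)$ has right-hand side that is smooth (hence locally Lipschitz) in the phase variables and $L^1$ in $x$: the only potentially singular term is $\partial_x V/\xi_x\in L^1_x$, which uses $V\in IW^{1,1}$. Carath\'eodory's existence theorem combined with the $L^1$-form of Gronwall's inequality then produces a unique absolutely continuous local solution, which glues with the smooth flow on $\{f>0\}$ to give the unique maximal integral curve in $\Hyp$.

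For $\rho_0\in\Gl$, set $c:=\partial_x(k+V)(\rho_0)\ne 0$. Under $V\in IW^{2,1}$, the one-dimensional embedding $W^{2,1}\hookrightarrow C^1$ shows that $\partial_x V\in C^0$, so $H_p$ is a continuous vector field and Peano's theorem yields local existence of an integral curve through $\rho_0$. The nondegeneracy $c\ne 0$ forces $\dot\xi_x(0)=-c\ne 0$, so $\xi_x$ is strictly monotone through zero, and $\xi_x$ may be used as the independent variable on each side of the glancing point; for $t\ne 0$ both one-sided pieces lie in $\Hyp$. Uniqueness for each one-sided piece is then established by a weighted self-improving estimate in the scale $|t|\sim|\xi_x|$, $|x|\sim\xi_x^2$ forced by the quadratic normal form $x(t)=-ct^2+o(t^2)$, $\xi_x(t)=-ct+o(t)$: the non-Lipschitz $x$-dependence of $\partial_x V$ is balanced by the quadratic vanishing of $x(t)$, so that the modulus of continuity $\omega$ of $\partial_x V$ is composed with an $O(s^2)$ argument and integrates against the weights to close the estimate. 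Matching at $\xi_x=0$ identifies the two one-sided curves with a single bicharacteristic through $\rho_0$.

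\textbf{Main obstacle.} The main difficulty is uniqueness at glancing. The coefficient $\partial_x V$ is continuous but need not be Lipschitz, the direct Picard iteration in time $t$ therefore fails, and the scalar energy-reduced equation $\dot x=-\sqrt{-2\smallint_0^x\partial_x(k+V)}$ admits the spurious solution $x\equiv 0$. Resolving this requires exploiting the coupled Hamilton structure together with the forced quadratic vanishing $\ddot x(0)=-2c\ne 0$ to close the estimate in the appropriately weighted norm rather than working with the scalar equation in $x$ alone.
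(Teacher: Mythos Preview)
Your plan matches the paper's proof closely. The hyperbolic case is identical: reparametrize by $x$ (since $\dot x=2\xi_x\ne 0$) and invoke Carath\'eodory's theorem, the hypotheses being satisfied because the right-hand side is smooth in the fibre variables with Lipschitz constant in $L^1_x$.

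For the glancing case your outline---Peano existence from $W^{2,1}\hookrightarrow C^1$, then the a~priori quadratic normal form $x(t)=-ct^2+o(t^2)$, $\xi_x(t)=-ct+o(t)$, then a weighted self-improving estimate---is exactly the paper's strategy. Two points of divergence are worth flagging. First, the paper keeps $t$ as the independent variable throughout rather than switching to $\xi_x$. Second, and more substantively, the closing mechanism is not a modulus-of-continuity argument: the paper writes the difference of two solutions via the mean value theorem as
\[
|\dot\xi_1-\dot\xi_2|(t)\le\Big(\int_0^1|\partial_x^2 r(s\rho_1'(t)+(1-s)\rho_2'(t))|\,ds+C\Big)\,|\rho_1'(t)-\rho_2'(t)|,
\]
integrates in $t$, and then changes variables from $t$ to $x$ using $|\dot x(t)|\sim c|t|$. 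This converts the time integral of $|\partial_x^2 r|$ into an $x$-integral over $[0,x(t)]$, which is small precisely because $\partial_x^2 V\in L^1_x$ and $x(t)=O(t^2)$. The weight $t^{-2}R_1(t)^{-1}$, with $R_1(t)=\int_{|s|\le 2|c|t^2}\sup_{y,\eta}|\partial_x^2 r(s,y,\eta)|\,ds+|t|$, is what makes the bootstrap close. A bare modulus-of-continuity bound $\omega(|x_1-x_2|)$ does not give the linear factor in $|\rho_1'-\rho_2'|$ needed for a Gronwall-type argument, and a general $C^0$ modulus need not satisfy an Osgood condition; so your sketch should be read as shorthand for the $L^1$ mechanism above rather than as a genuinely different route.
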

The integral curves in the hyperbolic region over $Y$ in general satisfy an ODE with merely $L^1$ coefficients; the solution is an absolutely continuous function of $t$ and the equation is satisfied weakly.  Note that this level of coefficient regularity ($L^1$) is below that required by the Peano existence theorem (continuity) hence the existence depends on the structure of the singularities.  Once we reach $IW^{2,1}$ regularity for $V$, by contrast, the coefficients of the Hamilton vector field are absolutely continuous hence we have existence by the Peano theorem, but not uniqueness; here again uniqueness is recovered from the particularities of the singularity structure.

We let $\varphi_t(\bullet)$ denote the Hamilton flow on either $\Hyp$ or on $\Hyp \cup \Gl$, according to the regularity of $V$.

We now turn to results on propagation of defect measure. We begin with 
a general low-regularity propagation result that holds for unstructured singular potentials (i.e., not yet employing the notion of conormality used above).  Note that this result is also obtainable from the (stronger) results of \cite{BuDeLeRo:22}; we include it here for the sake of completeness rather than novelty.
\begin{theorem}\label{theorem:C1}
Suppose that $V \in \mathcal{C}^1(M)$ is real valued and $u$ satisfies
$$
\|(-h^2\Delta_g+V)u\|_{L^2}=o(h),\qquad \|u\|_{L^2}\leq C,
$$
and has defect measure $\mu$. Then $\supp \mu\subset \{p=0\}$ and 
for all $a \in \CI_c(T^*M)$, $\mu(H_p(a))=0$.
\end{theorem}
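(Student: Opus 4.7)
The strategy is the standard positive-commutator argument for propagation of defect measures, carefully adapted to $V \in \mathcal{C}^1$. The heart of the matter is to establish
\[
\tfrac{i}{h}[P, \op(a)] = \op(H_p a) + o(1)_{L^2\to L^2}
\]
for each $a \in C_c^\infty(T^*M)$. The $-h^2\Delta_g$ contribution is handled by standard semiclassical calculus (producing an $O(h)$ error), so the real work lies in understanding $[V,\op(a)]$, for which the ordinary composition formula is not directly available.

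For this commutator I would work directly with the Schwartz kernel, writing the Taylor remainder
\[
V(x)-V(y) = (x-y)\cdot\int_0^1 \nabla V\!\lr{y + t(x-y)}\,dt
\]
and converting the factor $x-y$ into $-ih\partial_\xi$ by integration by parts in the oscillatory integral for $\op(a)$. Replacing the averaged gradient by its value at $x$ produces the main term $-ih\,\op(\partial_x V\cdot\partial_\xi a) = -ih\,\op(H_V a)$; the error has kernel of the form $h\,K_{\partial_\xi a}(x,y)\cdot[\tilde V(x,y)-\nabla V(x)]$, with the bracketed factor bounded by $\omega(|x-y|)$ where $\omega$ is a modulus of continuity of $\nabla V$ on a compact set (available because $V\in\mathcal{C}^1$). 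Since $K_{\partial_\xi a}$ decays faster than any power of $\langle(x-y)/h\rangle^{-1}$, splitting a Schur estimate at $|x-y|=h^{1/2}$ yields an $L^2\to L^2$ bound of $h(\omega(h^{1/2})+O(h^N))=o(h)$, as required. This is the main technical obstacle.

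Granting the commutator identity, the rest is routine. Assuming $a$ real (WLOG), self-adjointness of $P$ (since $V$ is real) together with $\op(a)^*=\op(a)+O(h)$ gives
\[
\langle [P,\op(a)]u,u\rangle = 2i\,\Im\langle \op(a)u, Pu\rangle + o(h) = o(h)
\]
using $\|Pu\|_{L^2}=o(h)$ and $\|u\|_{L^2}\leq C$. Combining with the commutator formula yields $\langle\op(H_p a)u,u\rangle = o(1)$, and passing to the subsequence defining $\mu$ gives $\mu(H_p a)=0$.

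For the support statement, given $b\in C_c^\infty(T^*M)$ with $\supp b\subset\{p\neq 0\}$, let $V_\epsilon$ be a smooth mollification of $V$ and set $P_\epsilon=-h^2\Delta_g+V_\epsilon$. On $\supp b$ one has $|p_\epsilon|\geq c/2$ for small $\epsilon$, so by smooth semiclassical calculus one constructs $q_\epsilon\in C_c^\infty(T^*M)$ with $\op(q_\epsilon)P_\epsilon=\op(b)+O(h)_{L^2\to L^2}$. Then
\[
\|\op(b)u\|_{L^2}\leq \|\op(q_\epsilon)\|\cdot\lr{\|Pu\|_{L^2}+\|V-V_\epsilon\|_\infty\|u\|_{L^2}}+O(h),
\]
and taking $h\to 0$ along the defect-measure subsequence followed by $\epsilon\to 0$ (with a suitable coupling of $\epsilon=\epsilon(h)$ to control the $h$-dependence of derivatives of $q_\epsilon$) yields $\int|b|^2\,d\mu=0$, whence $\supp\mu\subset\{p=0\}$.
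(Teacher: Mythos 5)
Your proposal is correct and, for the propagation part, takes a genuinely different route from the paper. The paper mollifies $V$ to $V_\epsilon$, applies the smooth calculus to get $\mu(H_{p_\epsilon}a)$ in the $h\to 0$ limit, and controls the error via a standalone kernel/Schur estimate $\|[\op(a),V-V_\epsilon]\|_{L^2\to L^2}\leq Ch\|V-V_\epsilon\|_{W^{1,\infty}}$; it then sends $\epsilon\to 0$ and appeals to dominated convergence to pass from $\mu(H_{p_\epsilon}a)$ to $\mu(H_p a)$. You instead work directly with the singular potential, writing $V(x)-V(y)=(x-y)\cdot\int_0^1\nabla V(y+t(x-y))\,dt$, integrating by parts to trade $(x-y)$ for $h\partial_\xi$, and isolating the principal term $\op(H_V a)$; the error is then controlled by the modulus of continuity $\omega$ of $\nabla V$ through a Schur estimate split at $|x-y|=h^{1/2}$, giving $o(h)$ in one step. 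This avoids the double limit and the dominated-convergence step, at the cost of having to track $\omega$ explicitly. The two kernel computations are closely related (the paper's Lemma proving the uniform $O(h\|V\|_{W^{1,\infty}})$ bound is essentially the crude version of your estimate, without extracting the principal part), so the technical heart is similar, but the organization is genuinely different: the paper proves a clean uniform commutator bound and outsources the symbol calculus to the smooth $V_\epsilon$, while you do the calculus once with $V$ itself.

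One small caveat on your treatment of $\supp\mu\subset\{p=0\}$: the mollification route is the same as the paper's elliptic lemma, but the parenthetical about coupling $\epsilon=\epsilon(h)$ is not needed and slightly muddles the argument. Since $V_\epsilon\to V$ uniformly, for $\epsilon$ small $|p_\epsilon|$ is bounded below by a fixed $c>0$ on $\WFh(\op b)$, so the elliptic constant (and the $O(h)$ remainder constant) for $P_\epsilon$ is uniform in small $\epsilon$. One may therefore take $\limsup_{h\to 0}$ for each fixed small $\epsilon$ and then send $\epsilon\to 0$, with no coupling. The worry about ``$h$-dependence of derivatives of $q_\epsilon$'' is misplaced: $q_\epsilon$ is a fixed compactly supported symbol once $\epsilon$ is fixed.
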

Note that at this level of coefficient regularity (continuous), bicharacteristics exist but may fail to be unique in general, hence our conclusion concerns $H_p(a)$ but does not address the question of propagation along individual bicharacteristics or invariance under the (undefined) flow.  (In \cite{BuDeLeRo:22}, it is shown as a corollary that the support of $\mu$ must indeed be a union of the (non-unique) integral curves.)

In regions of $T^*M$ where the flow \emph{is} well defined, Theorem~\ref{theorem:C1} yields flow-invariance of the defect measure; in particular, coupled with Theorem~\ref{theorem:flow}, it yields the following result at hyperbolic points and second-order glancing points; here the strengthened regularity hypothesis on $V$ gives us existence of the flow:
\begin{corollary}\label{corollary:glancing}
Suppose that $V \in IW^{2,1}(Y)$ is real valued and $u$ satisfies
$$
\|(-h^2\Delta_g+V)u\|_{L^2}=o(h),\qquad \|u\|_{L^2}\leq C,
$$
and has defect measure $\mu$. If $B\subset\{p=0\}$ is Borel and 
$$
\inf_{0\leq t\leq T,\rho\in B}|H_p^2(\pi^*f)(\varphi_t(\rho))|+|H_p(\pi^*f)(\varphi_t(\rho))|+|\pi^*f(\varphi_t(\rho))|>0,
$$
then
$$
\mu(B)=\mu(\varphi_T(B)).
$$
\end{corollary}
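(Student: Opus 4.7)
The plan is to combine Theorems~\ref{theorem:flow} and~\ref{theorem:C1}. First, I would note that $V\in IW^{2,1}(Y)$ implies $V\in C^1(M)$ via the one-dimensional Sobolev embedding $W^{2,1}(\R_x)\hookrightarrow C^1(\R_x)$ applied in Fermi normal coordinates (where $V$ is smooth in the tangential variables), so Theorem~\ref{theorem:C1} yields $\mu(H_p a)=0$ for every $a\in C_c^\infty(T^*M)$. Separately, the infimum hypothesis forces $|\pi^*f|+|H_p\pi^*f|+|H_p^2\pi^*f|\geq c>0$ at every point of the compact set $K:=\bigcup_{0\leq t\leq T}\varphi_t(B)$, so $K\subset\Hyp\cup\Gl$; by Theorem~\ref{theorem:flow} the flow is well-defined on an open neighborhood $U\supset K$.

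To convert the infinitesimal invariance $\mu(H_p a)=0$ into the finite-time invariance $\mu(B)=\mu(\varphi_T(B))$, I would use a time-averaging trick: for $a\in C_c^\infty(T^*M)$ whose support and forward translates $\varphi_s(\supp a)$, $0\leq s\leq T$, are all contained in $U$, set
\[
b(\rho)\;:=\;\int_0^T a(\varphi_s(\rho))\,ds.
\]
The flow property gives $H_p b=a\circ\varphi_T-a$ almost everywhere along trajectories, so a formal use of Theorem~\ref{theorem:C1} with $b$ as test function would immediately deliver $\mu(a\circ\varphi_T)=\mu(a)$.

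The main obstacle is that $b$ is not smooth: under $V\in C^1$ the vector field $H_p$ has only continuous coefficients, so the flow $\varphi_s$ is merely continuous in $\rho$ and $b$ inherits no better regularity. I would bypass this by regularizing with smooth potentials $V_\varepsilon:=V*\chi_\varepsilon$ and their smooth flows $\varphi_s^\varepsilon$, which converge to $\varphi_s$ uniformly on $K$ by the uniqueness provided by Theorem~\ref{theorem:flow}. Setting $b_\varepsilon:=\int_0^T a\circ\varphi_s^\varepsilon\,ds\in C_c^\infty(T^*M)$, Theorem~\ref{theorem:C1} gives
\[
0=\mu(H_p b_\varepsilon)=\mu(a\circ\varphi_T^\varepsilon-a)+\mu\bigl((H_p-H_{p_\varepsilon})b_\varepsilon\bigr),
\]
and the delicate step is to show that the second term vanishes as $\varepsilon\to 0$. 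Since $\partial_x^2 V$ is only $L^1$ in the transverse direction, $\nabla_\rho\varphi_s^\varepsilon$ may blow up and a naive $L^\infty$ estimate fails; the argument must instead exploit the Fermi-coordinate separation (smooth tangentially, singular only in $x$) together with the boundedness of $\mu$, in the spirit of DiPerna--Lions commutator estimates adapted to the conormal structure.

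Once $\mu(a\circ\varphi_T)=\mu(a)$ is known for all smooth $a$ supported in $U$, the equality extends by density to continuous $a$, so the pushforward Radon measure $(\varphi_T)_*\mu$ agrees with $\mu$ on $U$. Applying this to the Borel set $\varphi_T(B)\subset U$ yields $\mu(B)=\mu(\varphi_T(B))$.
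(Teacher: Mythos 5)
Your structural approach is correct and matches the paper's intent: embed $IW^{2,1}(Y)\subset \mathcal{C}^1$ via the one-dimensional Sobolev inequality in Fermi coordinates, invoke Theorem~\ref{theorem:C1} to get $\mu(H_pa)=0$ for smooth $a$, use the non-degeneracy hypothesis together with Theorem~\ref{theorem:flow} to obtain a unique flow along $K=\bigcup_{0\leq t\leq T}\varphi_t(B)$, and then attempt to upgrade the infinitesimal invariance to finite-time invariance. You also correctly identify the crux: since the flow $\varphi_s$ is only continuous (Lipschitz at best in the hyperbolic region, with no quantitative modulus established by the paper near second-order glancing), the time-averaged test function $b=\int_0^T a\circ\varphi_s\,ds$ is not $\mathcal{C}^\infty$, so Theorem~\ref{theorem:C1} does not directly supply $\mu(H_pb)=0$. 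And straightforward mollification of $b$ does not rescue matters because the defect measure $\mu$ may be singular with respect to Lebesgue measure, so a.e.-Lebesgue convergence of $\nabla b_n$ does not give $\mu(H_p(b_n-b))\to 0$.

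The gap is in your proposed resolution: you regularize $V$, form $b_\varepsilon=\int_0^T a\circ\varphi_s^\varepsilon\,ds\in\mathcal{C}^\infty_c$, and then must show $\mu\bigl((H_p-H_{p_\varepsilon})b_\varepsilon\bigr)\to 0$, but you leave this as a stated plan ("in the spirit of DiPerna--Lions commutator estimates") and explicitly say you do not know how to carry it out. That step is precisely the content of the corollary beyond Theorems~\ref{theorem:flow} and~\ref{theorem:C1}, so as written your proof is incomplete. Two remarks that would help you close it: first, your worry that $\nabla_\rho\varphi_s^\varepsilon$ blows up is overly pessimistic in the hyperbolic region --- parametrizing by the transverse coordinate $x$ as in the proof of Lemma~\ref{l:hyp}, the variational equation for $\nabla\varphi^\varepsilon$ has coefficients bounded in $L^1_x$ uniformly in $\varepsilon$ (coming from $\partial_x r_\varepsilon$ and $\nabla_{y,\eta}\partial_x r_\varepsilon$), and Gr\"onwall in $x$ gives a uniform Lipschitz bound on $\varphi^\varepsilon_s$ and hence on $b_\varepsilon$; second, $H_p-H_{p_\varepsilon}=\partial_x(V-V_\varepsilon)\,\partial_\xi$ has coefficient tending to $0$ in $L^1_x(\mathcal{C}^\infty_{y,\eta})$, which, combined with the $L^1_xL^\infty_{y\eta}$ control of $\mu$ established in Lemma~\ref{l:L1} (after the reduction of $\xi$-dependence to linear on the characteristic set as in the last lemma of Section~7), is suggestive. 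The genuine difficulty is the second-order glancing region, where the paper's uniqueness lemma gives no Lipschitz bound on the flow map. For context, the paper itself presents this corollary as an immediate combination of Theorems~\ref{theorem:flow} and~\ref{theorem:C1} and does not spell out this passage from infinitesimal to finite-time invariance.
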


As with the existence theory for integral curves, we can reduce the regularity assumptions for propagation of singularities to $W^{1,1}$ by assuming that the singularities of $V$ have the structure of a conormal distribution with respect to a hypersurface and that we restrict our attention to $\Hyp\subset T^*M$.
\begin{theorem}
\label{t:hyp}
Suppose $V\in IW^{1,1}(Y)$ be real valued and $u$ satisfies
$$
\|(-h^2\Delta_g+V)u\|_{L^2}=o(h),\qquad \|u\|_{L^2}\leq C,
$$
and has defect measure $\mu$. Then $\supp \mu\subset \{p=0\}$ and for all $B\subset \{p=0\}$ Borel and $T>0$ such that
$$
\inf_{0\leq t\leq T,\rho\in B}|H_p(\pi^*f)(\varphi_t(\rho))|+|\pi^*f(\varphi_t(\rho))|>0,
$$
we have 
$$
\mu(B)=\mu(\varphi_T(B)).
$$
\end{theorem}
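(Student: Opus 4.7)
The plan is to adapt the standard positive-commutator defect-measure method: derive the integral identity $\int H_p a\,d\mu=0$ for $a\in C_c^\infty(T^*M)$ with $\supp a\subset\Hyp$, then convert it into flow invariance using the flow on $\Hyp$ provided by Theorem~\ref{theorem:flow}. The main challenge is making sense of $[V,\op(a)]/(ih)$ in the $h\to 0$ limit when $V$ has merely $L^1$ regularity in the direction normal to $Y$.

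Support of $\mu$ in $\{p=0\}$ is standard: since $W^{1,1}(\mathbb{R})\hookrightarrow C^0(\mathbb{R})$, the potential $V$ and hence $p$ are continuous, and the usual ellipticity argument applied to $\|Pu\|_{L^2}^2=o(h^2)$ gives the conclusion. For the propagation, self-adjointness of $P$ together with $Pu=o(h)_{L^2}$ yields
\[
\tfrac{i}{h}\langle[P,\op(a)]u,u\rangle=\tfrac{2}{h}\Im\langle Pu,\op(a)u\rangle\to 0.
\]
Splitting $[P,\op(a)]=[-h^2\Delta_g,\op(a)]+[V,\op(a)]$, the smooth semiclassical calculus handles the Laplacian commutator and yields $\int H_{|\xi|_g^2}a\,d\mu$ in the limit. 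The remaining task is to prove
\[
\lim_{h\to 0}\tfrac{i}{h}\langle[V,\op(a)]u,u\rangle=\int H_V a\,d\mu,
\]
with $H_V a=-\partial_x V\,\partial_\xi a-\partial_y V\cdot\partial_\eta a$ in Fermi coordinates $(x,y)$ about $Y$, the right-hand side being interpreted via a disintegration of $\mu$ transverse to the flow on $\Hyp$.

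Mollify $V$ only in the normal direction: $V_\e:=V*_x\chi_\e$. The iterated $IW^{1,1}(Y)$ hypothesis gives $V_\e\to V$ and $\partial_y V_\e\to\partial_y V$ uniformly on compacta, while $\partial_x V_\e\to\partial_x V$ in $L^1_x$ uniformly in $y$. Writing $[V,\op(a)]=[V_\e,\op(a)]+[V-V_\e,\op(a)]$, the smoothed commutator is governed by the standard calculus: $\tfrac{i}{h}[V_\e,\op(a)]=\op(H_{V_\e}a)+O(h)_{L^2\to L^2}$, and its expectation against $u$ converges to $\int H_{V_\e}a\,d\mu$, then to $\int H_V a\,d\mu$ as $\e\to 0$. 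The entire argument reduces to the uniform commutator error bound
\[
\limsup_{h\to 0}\tfrac{1}{h}\bigl|\langle[V-V_\e,\op(a)]u,u\rangle\bigr|\xrightarrow[\e\to 0]{}0.
\]

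The main obstacle is this error bound. Expanding the Schwartz kernel of $[V-V_\e,\op(a)]$ in Fermi coordinates and integrating by parts in $\xi$ via $(x-x')e^{i(x-x')\xi/h}=-ih\,\partial_\xi e^{i(x-x')\xi/h}$ reduces the commutator, modulo tangential terms controlled by continuity of $\partial_y V$, to the Weyl-type quantization of $q_\e:=-\partial_x(V-V_\e)\,\partial_\xi a$. This symbol is only $L^1_x$ and does not define a bounded operator in the standard calculus. The hypothesis $\supp a\subset\Hyp$ is what rescues the estimate: on the part of $\supp a$ away from $Y$, the symbol vanishes identically for $\e$ small, while on the part near $Y$ the transversality $H_p\pi^*f\ne 0$ forces $|\xi|\ge c>0$ on $\supp\partial_\xi a$. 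One then decomposes $\op(q_\e)$ into operators localized on $x$-slabs of width $\e$; on each slab, the oscillation $e^{i(x-x')\xi/h}$ with $|\xi|\ge c$ yields a non-stationary-phase estimate bounding the operator on $L^2$ uniformly in $h$ by the $L^\infty_x$-norm of $\partial_x(V-V_\e)$ over the slab, and summing over slabs yields a total bound by $\|\partial_x(V-V_\e)\|_{L^1_x}$, which vanishes as $\e\to 0$. Carrying out this slab decomposition is the technical heart of the proof; combined with the preceding reductions it establishes $\int H_p a\,d\mu=0$ and hence the theorem.
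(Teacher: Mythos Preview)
The slab decomposition step is where the argument breaks. You claim that on each $x$-slab the operator norm of $\op(q_{\e,j})$ is bounded by the $L^\infty_x$-norm of $\partial_x(V-V_\e)$ over that slab, and that summing gives $\|\partial_x(V-V_\e)\|_{L^1_x}$. But summing sup-norms over a partition does \emph{not} yield an $L^1$-norm: you are missing a factor of the slab width. Concretely, if $M_j=\sup_{I_j}|\partial_x(V-V_\e)|$ and $|I_j|=\delta$, then $\sum_j M_j\delta\approx\|\partial_x(V-V_\e)\|_{L^1}$, whereas your bound produces only $\sum_j M_j$. The invocation of non-stationary phase with $|\xi|\ge c$ does not recover this factor: oscillation in $e^{i(x-x')\xi/h}$ localizes the kernel to $|x-x'|\lesssim h$, but that is the standard pseudolocality of $\op$, and it contributes nothing beyond the Calder\'on--Vaillancourt bound you have already used on each slab. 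Integration by parts in $x$ would hit the rough coefficient $\partial_x(V-V_\e)$; integration by parts in $x'$ would hit $u$, which you do not control in $W^{1,\bullet}$.

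What actually closes the gap is an \emph{a priori} bound $\|Xu\|_{L^\infty_xL^2_y}+\|hD_xXu\|_{L^\infty_xL^2_y}\lesssim 1$ for a tangential cutoff $X$ to $\{r>0\}$: with that in hand, $|\langle\op(q_{\e,j})u,u\rangle|\lesssim M_j\|\mathbf 1_{\tilde I_j}Xu\|_{L^2}^2\lesssim M_j\delta$, and the sum is the desired $L^1$-norm. The paper obtains this $L^\infty_xL^2_y$ control by factorizing $P$ on $\{r>0\}$ as $(hD_x-\Lambda)(hD_x+\Lambda)+hE_\pm$ with $\Lambda\in\psit{W^{1,1}}$ and running an energy/Gr\"onwall argument in $x$ for $(hD_x\pm\Lambda)u$ (Lemmas~\ref{l:LinfL2} and~\ref{l:hyperbolicLinfty}). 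Only after establishing this does the paper test with tangential operators $\opt(a_0)+\opt(a_1)hD_x$, whose commutators with $P$ involve exactly the $L^1_x$ symbol $\partial_x r$, and pass to the limit via Lemma~\ref{l:L1}. Your mollification/commutator framework is reasonable scaffolding, but without the factorization-based $L^\infty_xL^2_y$ estimate the error term $\tfrac1h\langle[V-V_\e,\op(a)]u,u\rangle$ cannot be shown to vanish, and the argument does not close.
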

Recall that the flow $\varphi_t:=\exp(tH_{p}):B\to T^*M$ is well defined for $t\in[0,T]$ by Theorem~\ref{theorem:flow}.



{As noted above, the results of this paper are established with weaker hypotheses on $V$ than the conormal ones stated above. The conormality hypotheses employed in the statements of the main theorems have the virtue of making invariant sense on a manifold (independent of metric) and of making contact with the hypotheses of the prior work \cite{GaWu:18}, where conormality plays an essential role.  Here, however, we can in fact get away with weaker hypotheses as follows (stated locally near $Y=\{x=0\}$ as the theorems are local in nature).  Theorem~\ref{theorem:flow} holds under the hypothesis that $V \in W^{1,1}(\RR_x; \CI(\RR_y^{d-1}))$ or\ $W^{2,1}(\RR_x; \CI(\RR_y^{d-1}))$ at $\Hyp$ and $\Hyp \cup \Gl$ respectively.  Corollary~\ref{corollary:glancing} then likewise requires only $W^{2,1}(\RR_x; \CI(\RR_y^{d-1}))$.  Finally, Theorem~\ref{t:hyp} requires only $V \in W^{1,1}(\RR_x; \CI(\RR_y^{d-1}))$.

The organization of this paper is as follows (note that it somewhat diverges from the order in which the results are stated above).  Theorem~\ref{theorem:flow} is proved in Section~\ref{section:bichar} below, with the hyperbolic and glancing versions of the theorem being respectively Lemma~\ref{l:hyp} and Lemma~\ref{l:gl}.  Theorem~\ref{theorem:C1}, which differs from the other main results presented here in having unstructured hypotheses on $V$, follows from the very general elliptic estimate Lemma~\ref{l:ellipticU} (to obtain $\supp \mu \subset\{p=0\}$) together with the propagation result Lemma~\ref{lemma:C1prop} from the last section of the paper.  Finally, Theorem~\ref{t:hyp}, dealing with hyperbolic propagation, follows from the elliptic estimate, Lemma~\ref{l:ellipticU}, coupled with the hyperbolic propagation estimate, Lemma~\ref{lemma:hypprop}.

Propagation of singularities for operators with conormal singularities has been studied both for wave equations~\cite{deHoUhVa:15} and in the semiclassical case~\cite{GaWu:18}. In~\cite{GaWu:18}, Gannot and the second author quantify the level (in terms of powers of $h$) at which singularities do not diffract off of $Y$ under stronger assumptions on the potential $V$ using sophisticated techniques from microlocal analysis. In contrast, the methods used in this article use only basic pseudodifferential calculus. We believe that the methods in this paper could also give the more refined estimates under less restrictive assumptions on $V$ than those in~\cite{GaWu:18}, but we do not pursue this here; instead aiming to give a relatively simple and accessible proof.

Propagation of singularities for rough metrics without structure assumptions has recently been investigated in~\cite{BuDeLeRo:22}, where the authors study  the question of null-controllability of the wave equation for $\mathcal{C}^1$ metrics.

\noindent\textsc{Acknowledgements.} 

\noindent The authors are grateful to two anonymous referees for helpful comments on the manuscript.  JG acknowledges support from EPSRC grants EP/V001760/1 and EP/V051636/1.  JW was partially supported
by Simons Foundation grant 631302, NSF grant DMS--2054424, and a
Simons Fellowship.

\noindent \textsc{Data Availability Statement.} This project generated no data.



\section{Conormal distributions}

We begin by clarifying our hypotheses on conormal regularity of $V$.  As above, let $Y \subset M$ be a smooth, embedded hypersurface and let $n=\dim M$.  Following H\"ormander \cite[Section 18.2]{Ho:85}, we recall the definition of conormal distributions $u \in I^m(M, Y)$ if for all $N \in \NN$ and all smooth vector fields $L_1,\dots L_N$ tangent to $Y$,
$$
L_1\dots L_N u \in {}^\infty H^{\text{loc}}_{(-m-n/4)}(M).
$$
We will not be concerned here with the specifics of the Besov space $H^{\text{loc}}_{(-m-n/4)}(M)$; rather, we note the equivalent definition (Theorem 18.2.8 of \cite{Ho:85}) that $u$ should be smooth away from $Y$ and that locally near $Y$, in coordinates $x,y$ in a collar neighborhood of $Y$ with $x$ a boundary defining function,
$$
u= \int_{\RR} e^{ix\xi} a(x,y,\xi) \, d\xi,
$$
where
$$
a \in S^{m+n/4-1/2}(\RR^{n}_{x,y}\times \RR_\xi)
$$
is a Kohn--Nirenberg symbol i.e.,
$$
a\in S^m(\RR^{n}_{x,y}\times \RR_\xi)\Longleftrightarrow  |\partial_{(x,y)}^\alpha\partial_\xi^\beta a(x,\xi)|\leq C_{\alpha \beta}\langle \xi\rangle^{m-|\beta|}.
$$
We now connect this well-known scale of spaces to the spaces of distributions arising in our hypotheses.
\begin{lemma}
Let $k \in \NN$.  Then for all $\ep>0$,
\begin{equation}\label{conormalinclusion}
I^{-n/4+1/2-k-\ep}(M, Y) \subset IW^{k,1}(Y) \subset I^{-n/4+1/2-k}(M,Y).
\end{equation}
\end{lemma}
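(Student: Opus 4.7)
\medskip
\noindent\textit{Proof proposal.}
Work locally in Fermi normal coordinates $(x,y)$ near $Y$ with $Y=\{x=0\}$, so that tangent vector fields to $Y$ are locally generated by $\partial_{y_j}$ and $x\partial_x$. Away from $Y$ all spaces in question reduce to $C^\infty$, so the content is concentrated near $Y$, and the plan is to use the oscillatory integral representation of $I^m(M,Y)$ to tie $W^{k,1}$ regularity in $x$ to Kohn--Nirenberg symbol decay of order $\langle\xi\rangle^{-k}$ in $\xi$.

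For the first inclusion, write $u=\int e^{ix\xi}a(x,y,\xi)\,d\xi$ with $a\in S^{-k-\ep}$. Both tangent vector fields $\partial_{y_j}$ and $x\partial_x$ preserve the conormal class (the latter, after integration by parts in $\xi$, corresponds to the operator $-1-\xi\partial_\xi$ acting on symbols), so it suffices to show that any such oscillatory integral lies in $W^{k,1}_{\mathrm{loc}}$. Taking $j\leq k$ derivatives in $x$ together with any number in $y$ produces a new oscillatory integral whose symbol lies in $S^{j-k-\ep}\subset S^{-\ep}$. A standard dyadic decomposition in $\xi$, together with non-stationary phase for $|\xi|\gtrsim |x|^{-1}$, bounds this oscillatory integral pointwise by $C(|x|^{\ep-1}+1)$ for $|x|$ small, with rapid decay as $|x|\to\infty$. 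Since $|x|^{\ep-1}$ is locally integrable, every $\partial^\gamma u$ with $|\gamma|\leq k$ is in $L^1_{\mathrm{loc}}$.

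For the second inclusion, cut $u$ off to be compactly supported near $Y$ and define the symbol directly by partial Fourier transform: $a(y,\xi)=\int e^{-ix\xi}u(x,y)\,dx$, so that $u=(2\pi)^{-1}\int e^{ix\xi}a(y,\xi)\,d\xi$. The task reduces to verifying $a\in S^{-k}$ uniformly in $y$. The non-$\xi$-derivative estimates follow from $\xi^j\partial_y^\alpha a(y,\xi)=\mathcal{F}_x(\partial_y^\alpha\partial_x^j u)(\xi,y)$, which is bounded by $\|\partial_y^\alpha\partial_x^j u(\cdot,y)\|_{L^1_x}$; this becomes uniform in $y$ after promoting the $L^1(\R^n)$ regularity of many $\partial_y^\beta$ derivatives to an $L^1_xL^\infty_y$ bound via Sobolev embedding in the bounded $y$ region (the embedding $W^{N,1}_y\hookrightarrow L^\infty_y$ holds for $N$ large enough). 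The $\xi$-derivative estimates come from the Fourier identity $\mathcal{F}_x(x\partial_x u)=-(1+\xi\partial_\xi)a$, iterated to $\mathcal{F}_x\bigl((x\partial_x)^N u\bigr)=(-(1+\xi\partial_\xi))^N a=O(\langle\xi\rangle^{-k})$ since $(x\partial_x)^N u\in W^{k,1}$. The highest-$N$ term of this polynomial identity (in $\xi\partial_\xi$) is $(-\xi\partial_\xi)^N a$, so solving inductively in $N$ yields $|\partial_\xi^N a|=O(\langle\xi\rangle^{-k-N})$ for $|\xi|\geq 1$, while for $|\xi|\leq 1$ the same estimates follow trivially from $\mathcal{F}_x(x^N u)\in L^\infty$.

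The main obstacle is this last $\xi$-derivative estimate. Taking Fourier duals naively of $x^N u\in W^{k,1}$ only produces $\partial_\xi^N a=O(\langle\xi\rangle^{-k})$ and loses the needed gain of $N$ in the symbol order. The extra decay has to come instead from the genuinely tangential structure of $x\partial_x$, whose Fourier conjugate is a first-order differential operator in $\xi$ rather than mere multiplication. Once this is in hand, the remaining work is careful bookkeeping of mixed $(y,\xi)$-derivatives, handled by combining the two identities above.
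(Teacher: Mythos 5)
Your proof is correct and follows essentially the same approach as the paper: for the first inclusion you split the oscillatory integral around $|\xi|\sim|x|^{-1}$ (organized dyadically, where the paper uses a single cutoff at $|x|^{-1-\ep}$ followed by one integration by parts) to obtain a pointwise $O(|x|^{\ep-1})\in L^1_{\mathrm{loc}}$ bound, and for the second you pass to the partial Fourier transform $\check u(\xi,y)$ in $x$ and exploit $\mathcal{F}_x(x\partial_x u)=-(1+\xi\partial_\xi)\check u$ together with $\mathcal{F}_x(\partial_x^\ell u)=(i\xi)^\ell\check u$, exactly the estimate $(\xi\partial_\xi)^j\partial_y^\alpha\xi^\ell\check u\in L^\infty$ the paper invokes. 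Your closing remark about why naive Fourier duality of $x^N u$ loses the $\langle\xi\rangle^{-N}$ gain, and why the tangential operator $x\partial_x$ rather than the multiplication operator $x$ must be used, correctly identifies the key point, and your inductive extraction of $|\partial_\xi^N \check u|=O(\langle\xi\rangle^{-k-N})$ from $(1+\xi\partial_\xi)^N\check u=O(\langle\xi\rangle^{-k})$ fills in a step the paper leaves implicit.
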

\begin{proof}
All spaces of distributions above coincide with $\CI(M)$ locally away from $Y$, hence we work near $Y$ in local coordinates $(x,y)$.  Given $u(x,y)$ let $\check u(\xi,y)$ denote the partial Fourier transform in the $x$ variable. Then
$$
u \in IW^{k,1}(Y) \Longrightarrow  (\xi \pa_\xi)^j \pa_y^\alpha \xi^\ell \check u \in L^\infty(\RR^n_{x,y} \times \RR_\xi)\ \text{for all}\  \ell \leq k,\ j \in \NN, \alpha \in \NN^{n-1}.
$$
Thus,
$$
u \in IW^{k,1}(Y) \Longrightarrow  \check u \in S^{-k}(\RR^n \times \RR),
$$
yielding the second inclusion in \eqref{conormalinclusion}.

To get the first inclusion, note that for $I^{-n/4+1/2-k-\ep}(M, Y)$, $$\mathcal{F}^{-1}(x\pa_x)^j \pa_y^\alpha \pa_{x,y}^\beta u \in S^{-k+\abs{\beta}-\ep}$$ for all $j,\alpha,\beta$, hence if $\abs{\beta}\leq k$
$$
(x\pa_x)^j \pa_y^\alpha \pa_{x,y}^\beta u(x,y) = \int e^{ix\xi} b(x,y,\xi) \, d\xi
$$
for some $b \in S^{-\ep}$.  It now suffices to show the RHS is in $L^1$.  To this end, we make a splitting
$$
\int e^{ix\xi} b(x,y,\xi) \, d\xi=\int_{\abs{\xi}<\abs{x}^{-1-\ep}} e^{ix\xi} b(x,y,\xi) \, d\xi+\int_{\abs{\xi}\geq\abs{x}^{-1-\ep}} e^{ix\xi} b(x,y,\xi) \, d\xi\equiv W_<+W_>.
$$
Then $\abs{W_<}\leq C \abs{x}^{-1-\ep}$ since the integrand is bounded; this term is thus in $L^1$.  Integration by parts in $W_>$ using the vector field $x^{-1} D_\xi$ yields a boundary term $O(\abs{x}^{-1+\ep(1-\ep)})$ (also in $L^1$) plus an integral $$x^{-1}\int_{\abs{\xi}\geq\abs{x}^{-1-\ep}} e^{ix\xi} b'\, d\xi$$ with $b' \in S^{-1-\ep}$.  This latter term is bounded by $$C x^{-1}\int_{\abs{\xi}>\abs{x}^{-1-\ep}}^\infty \abs{\xi}^{-1-\ep} \, d\xi,$$ again yielding a term in $L^1$.  This establishes the first inclusion in \eqref{conormalinclusion}.

\end{proof}

Note that the spaces of conormal distributions $IW^{k,1}$, defined via testing by vector fields, have the virtue of being manifestly coordinate invariant; the spaces $W^{k,1}((-\delta, \delta); \CI(Y))$, defined locally in normal coordinates, are bigger, and generally suffice for our needs, but are not defined invariantly in the absence of a metric, nor globally away from $Y$.
\begin{lemma}
The inclusion $IW^{k,1}(M,Y)\subset W^{k,1}((-\delta,\delta);\CI(Y))$ holds and is continuous.
\end{lemma}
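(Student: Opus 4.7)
The plan is to work locally in Fermi normal coordinates $(x,y)$ near $Y$, where $Y = \{x=0\}$ and each $\partial_{y_i}$ is a smooth vector field tangent to $Y$. The hypothesis $V \in IW^{k,1}(M,Y)$ then yields, for every multi-index $\alpha \in \NN^{n-1}$ and every $j \leq k$, that $\partial_x^j \partial_y^\alpha V \in L^1_{x,y}$ on $(-\delta, \delta) \times U$ for each coordinate patch $U \subset Y$, with explicit control of the resulting norms in terms of the seminorms defining $IW^{k,1}(M,Y)$.

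From here I would apply Sobolev embedding on the $(n-1)$-dimensional manifold $Y$: for each $\ell \in \NN$ there exists $m = m(\ell)$ such that $W^{m,1}(Y) \hookrightarrow C^\ell(Y)$, assembled from local patches via a partition of unity. Combining this with the previous step, Fubini yields
$$
\int_{-\delta}^{\delta} \bigl\|\partial_x^j V(x, \cdot)\bigr\|_{C^\ell(Y)} \, dx \leq C_\ell \sum_{|\alpha| \leq m} \bigl\|\partial_x^j \partial_y^\alpha V\bigr\|_{L^1((-\delta,\delta) \times Y)} < \infty
$$
for every $j \leq k$ and every $\ell$. Interpreting this as membership in the Bochner space $W^{k,1}((-\delta, \delta); \CI(Y))$ then reduces to checking strong measurability of $x \mapsto \partial_x^j V(x, \cdot)$ as a map into the separable Fréchet space $\CI(Y)$; this follows by approximating $V$ through mollification in $y$ and invoking the seminorm bounds above. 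Continuity of the inclusion is immediate from the explicit constants $C_\ell$, since each seminorm defining the topology of $W^{k,1}((-\delta,\delta);\CI(Y))$ is dominated by a finite sum of seminorms defining $IW^{k,1}(M,Y)$.

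The argument is essentially routine once the coordinate-invariant definition is traded for iterated $\partial_{y_i}$-derivatives in Fermi coordinates, and the only real content is that $y$-derivatives alone already suffice — the $x\partial_x$ generator of tangent-to-$Y$ vector fields plays no role. The mild subtlety, which I do not expect to be a genuine obstacle, is the Bochner measurability into the Fréchet-valued target $\CI(Y)$, handled by the standard mollification/density argument sketched above.
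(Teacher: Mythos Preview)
Your proposal is correct and follows essentially the same route as the paper: both work in Fermi coordinates, use that $\partial_{y_i}$ are tangent to $Y$ to extract $\partial_x^j\partial_y^\alpha V\in L^1_{x,y}$ for $j\le k$ and all $\alpha$, apply Sobolev embedding in $y$ pointwise in $x$, and then integrate in $x$. Your version is slightly more careful in that you address Bochner measurability into $\CI(Y)$, which the paper leaves implicit.
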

\begin{proof}
Let $u\in IW^{k,1}(Y)$. Since $u$ is smooth away from $Y$, we need only work locally near $Y$. Let $(x,y)$ be Fermi coordinates near $Y$ and $u\in IW^{k,1}(Y)$. Then, for each fixed $x$ and for each $j\leq k$, and $\beta \in \mathbb{N}^{n-1}$, the Sobolev emdedding in the $y$ variables yields
$$
\|\partial_x^j\partial_y^\beta  u(x,\cdot)\|_{L^\infty_y}\leq \|\partial_x^j\partial_y^\beta  u(x,\cdot)\|_{L^1_y} +\sum_{|\alpha|=n}\|\partial_y^{\alpha}\partial_x^j\partial_y^\beta u(x,\cdot)\|_{L^1_y},
$$
Integrating in $x$, we obtain 
$$
\| \partial_x^j\partial_y^\beta u \|_{L^1_xL_y^\infty}\leq \| \partial_x^j \partial_y^\beta 
 u\|_{L^1(M)}+\sum_{|\alpha|=n}\|\partial_y^{\alpha}\partial_x^j \partial_y^\beta u\|_{L^1(M)}<\infty,
$$
where the finiteness of the right-hand side follows from $u\in IW^{k,1}(M,Y)$. Since $\beta$ is arbitrary and $|j|\leq k$ is arbitrary, this implies the lemma.
\end{proof}

\section{On the bicharacteristic flow }\label{section:bichar}

In this section, we establish lemmas on the bicharacteristic flow that combine to prove Theorem~\ref{theorem:flow}.
\subsection{The bicharacteristic flow in the hyperbolic set}
We first consider the bicharacteristic flow along trajectories which pass through the hypersurface, $Y$, transversally. To do this, we employ normal coordinates with respect to the hypersurface $Y$, with $x$ denoting the signed distance to $Y$ and $y=(y_1,\dots, y_{d-1})$ tangential variables, so that the metric (which we recall is, by hypothesis, everywhere $\mathcal{C}^\infty$ and nondegenerate) takes the form
\begin{equation}\label{metric1}
g = dx^2 + h(x,y,dy)=dx^2 + h_{ij}(x,y) dy^i dy^j,
\end{equation}
with $h(x,y,dy)$ a smooth family in $x$ of metrics on $Y$.
The metric induces a dual metric on $T^* M$ given by
$$
\xi^2+h^{ij}(x,y) \eta_i \eta_j,
$$
with $h^{\bullet,\bullet}$ the inverse of $h$ in \eqref{metric1} and using  coordinates in which the canonical one-form is
$$
\xi dx + \eta \cdot dy.
$$
Then
$$
\sigma_h(-h^2\Lap_g) = \xi^2 + h^{ij}(x,y) \eta_i \eta_j
$$
and 
$$
p\equiv \sigma_h(P) = \xi^2-r
$$
with
\begin{equation}\label{r}
r=-V-h^{ij} \eta_i\eta_j.
\end{equation}
Hamilton's equations of motion now read
\begin{equation}\label{hamvf}
\begin{aligned}
\dot x&=2\xi\\
\dot \xi &= \frac{\pa r}{\pa x}\\
\dot y &= -\frac{\pa r}{\pa \eta}\\
\dot \eta &= \frac{\pa r}{\pa y}.
\end{aligned}
\end{equation}

\begin{lemma}
\label{l:hyp}
Suppose that $V\in W^{1,1}(\mathbb{R}_x;C^\infty(\mathbb{R}_y^{d-1}))$ and let $(x_0,\xi_0,y_0,\eta_0)\in \mathbb{R}^2\times \mathbb{R}^{2(d-1)}$ with $\xi_0\neq 0$. Then there are unique absolutely continuous functions $(x(t),\xi(t),y(t),\eta(t))$ solving~\eqref{hamvf} for almost every $t$ in a neighborhood of $t=0$ with initial data $(x_0,\xi_0,y_0,\eta_0)$.
\end{lemma}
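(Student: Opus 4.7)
The hypothesis $\xi_0\neq 0$ makes the Hamilton vector field transverse to the hypersurface $\{x=0\}$ at the initial point. This suggests reparametrizing the flow by $x$ instead of by $t$: in the new parametrization, the only coefficient with merely $L^1$ regularity, namely $\partial_x V$, depends on the new independent variable in an $L^1$ way and on the other state variables smoothly, placing the problem squarely within the setting of Carath\'eodory's theorem for ODEs with $L^1$-in-time right-hand sides.

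Assume, without loss of generality, $\xi_0>0$. Dividing the equations \eqref{hamvf} by $\dot x=2\xi$ formally yields a first-order system
$$
\frac{dz}{dx}=F(x,z),\qquad z=(\xi,y,\eta,t),\qquad z(x_0)=(\xi_0,y_0,\eta_0,0),
$$
whose right-hand side has components $\tfrac{-\partial_x V-\partial_x h^{ij}\eta_i\eta_j}{2\xi}$, $\tfrac{h^{kj}\eta_j}{\xi}$, $\tfrac{-\partial_{y^k}V-\partial_{y^k}h^{ij}\eta_i\eta_j}{2\xi}$, and $\tfrac{1}{2\xi}$, respectively. On a small neighborhood $U$ of the initial data where $\xi\geq\xi_0/2$ and $y$ stays in a fixed compact $K\Subset\mathbb{R}^{d-1}$, one obtains
$$
|F(x,z)|+|\partial_z F(x,z)|\leq C\bigl(1+\|\partial_x V(x,\cdot)\|_{C^1(K)}+\|V(x,\cdot)\|_{C^2(K)}\bigr),
$$
which is locally integrable in $x$ by the hypothesis $V\in W^{1,1}(\mathbb{R}_x;C^\infty(\mathbb{R}_y^{d-1}))$. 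Carath\'eodory's existence-and-uniqueness theorem (measurable $L^1_{\mathrm{loc}}$ majorant for $F$ together with a measurable $L^1_{\mathrm{loc}}$ Lipschitz constant in $z$) then yields a unique absolutely continuous solution $z(x)=(\xi(x),y(x),\eta(x),t(x))$ defined on some interval $(x_0-\delta,x_0+\delta)$.

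To transfer back to the $t$-parametrization, note that $dt/dx=1/(2\xi)$ is continuous and bounded above and below by positive constants on $U$, so $x\mapsto t(x)$ is a strictly increasing bi-Lipschitz bijection onto a neighborhood of $0$; its inverse $x(t)$ is Lipschitz, and the compositions $\xi(t):=\xi(x(t))$, $y(t):=y(x(t))$, $\eta(t):=\eta(x(t))$ are absolutely continuous and satisfy \eqref{hamvf} for almost every $t$ by the chain rule (valid for AC functions composed with a bi-Lipschitz change of variable). For uniqueness, if $(x(t),\xi(t),y(t),\eta(t))$ is any AC solution of \eqref{hamvf} with the given initial data, continuity of $\xi$ and $\xi(0)\neq 0$ force $\dot x=2\xi$ to be bounded away from zero near $t=0$, so $x(t)$ is locally invertible with AC inverse; reparametrizing by $x$ produces an AC solution of the Carath\'eodory system above with the prescribed initial data, which must coincide with the unique solution constructed. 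The one real subtlety is verifying the $L^1_{\mathrm{loc}}$ majorant and Lipschitz-constant hypotheses needed to invoke Carath\'eodory's theorem, but this follows directly from the Bochner--Sobolev regularity of $V$.
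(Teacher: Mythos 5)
Your proposal is correct and follows essentially the same route as the paper: both change the independent variable from $t$ to $x$ (legitimate because $\dot x = 2\xi$ is bounded away from zero near $t=0$), observe that the resulting right-hand side is $L^1$ in $x$ with an $L^1$-in-$x$ Lipschitz constant in the remaining variables, invoke the Carath\'eodory existence and uniqueness theorem, and then invert $t(x)$ to recover the $t$-parametrized solution.
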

\begin{proof}
We start by solving an auxiliary system of equations using the Carath\'eodory theory of ODEs. We will use $x$ as the independent variable since $\dot x=2\xi\neq 0$ in a neighborhood of $t=0$. 
Consider absolutely continuous functions $(x(t), \xi(t), y(t), \eta(t))$ solving the equation \eqref{hamvf} for almost every $t$ with initial data satisfying $\xi \neq 0$.  As long as $\xi \neq 0$, $\dot x =2\xi\neq 0$, with $x(t) \in \mathcal{C}^1$.  We may change the independent variable from $t$ to $x$ and equivalently solve
\begin{equation}\label{reparametrized}
\begin{aligned}
dt/dx &= (2\xi)^{-1}\\
d\xi/dx &= (2 \xi)^{-1} \frac{\pa r}{\pa x}\\
d y/dx &= -(2 \xi)^{-1} \frac{\pa r}{\pa \eta}\\
d\eta/dx &= (2 \xi)^{-1} \frac{\pa r}{\pa y}.
\end{aligned}
\end{equation}

Since $V\in W^{1,1}(\mathbb{R}_x;C^\infty(\mathbb{R}^{d-1}_y))$, and $g\in C^\infty$,  $\pa r/\pa y$ and $\pa r/\pa \eta$ are both in $W^{1,1}(\mathbb{R}_x;C^\infty(\mathbb{R}^{2(d-1)}_{(y,\eta)}))$ while $\pa r/\pa x \in L^{1}(\mathbb{R}_x;C^\infty(\mathbb{R}^{2(d-1)}_{(y,\eta)}))$. In particular, 
$$
\|\partial_{(y,\eta)}^\beta \pa r/\pa x\|_{L^1_x}\leq C_\beta.
$$
Hence, the mean value theorem yields the estimate
$$
\lvert \pa_x r(x,y_0,\eta_0)-\pa_x r(x,y_1,\eta_1)\rvert \leq C \sup_{y,\eta \in \Omega}  \lvert \nabla_{y,\eta} \pa_x r(x,y,\eta)\rvert \in L^1(\mathbb{R}_x)
$$
for all pairs $(y_0,\eta_0),(y_1,\eta_1)$ in a fixed neighborhood $\Omega$ of a given $(\overline{y},\overline{\eta}).$  A fortiori the same estimates (indeed, better ones) hold for $\pa_y r,$ $\pa_\eta r$.  Hence the hypotheses of the existence and uniqueness theorem of Carath\'eodory hold (see \cite[Theorem 5.3]{Ha:80}) and this theorem shows that there exists a unique solution to the equation with initial data in $(-\ep,\ep)_x \times \Omega$ and that the data-to-solution map is continuous.

Now, we simply define $x:(-\delta,\delta)_t\to \mathbb{R}$ by the inverse function of $t(x)$, which exists since $t$ is absolutely continuous with derivative bounded away from zero. Then the unique solution of~\eqref{hamvf} is given by $(x(t),\xi(x(t)),y(x(t)),\eta(x(t))).$
\end{proof}

\subsection{The bicharacteristic flow near glancing}

We now focus on trajectories which encounter $Y$ tangentially. In order to handle the flow in this setting, we will make some additional assumptions on the potential $V$ and the surface $Y$. Indeed, we assume that $V\in  W^{2,1}(\mathbb{R}_x;C^\infty(\mathbb{R}^{n-1}_y))$ and the surface $Y$ is (locally) such that for any defining function $f:M\to \mathbb{R}$ for $Y$,
\begin{equation}
\label{e:curved}
\{p=0,\ f=0,\ H_p\pi^*f=0,\ H_p^2\pi^*f= 0\}=\emptyset,
\end{equation}
where $\pi:T^*M\to M$ is the canonical projection. The assumption itself deserves a small comment since a priori $H_p^2$ is not well defined. However, since $\pi^*f$ depends only on the position variables in $M$, $H_Vf\equiv 0$ and we interpret $H_p^2\pi^*f$ as  
$$
H_p^2\pi^*f=(H_{|\xi|_g^2}+H_V)(H_{|\xi|_g^2}\pi^*f),
$$
which is well defined.

We include here an alternate characterization of the condition \eqref{e:curved} in terms of the Riemannian geometry of $Y$.
\begin{proposition}
The curvature condition \eqref{e:curved} is equivalent to the condition
\begin{equation}\label{curvature2}
\nabla_N V \notin \operatorname{conv}(2V k_1,\dots 2V k_{d-1})
\end{equation}
where $N$ denotes a choice of unit normal to $Y$, $k_j$ are the principal curvatures of $Y$, and $\operatorname{conv}$ denotes convex hull.
\end{proposition}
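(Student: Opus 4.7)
The plan is to perform the computation in Fermi normal coordinates $(x,y)$ about $Y$, in which $g = dx^2 + h_{ij}(x,y)\,dy^i dy^j$, $x$ is the signed distance to $Y$, and $N = \partial_x$. Since the condition in \eqref{e:curved} is invariant under change of defining function (for $\tilde f = \lambda f$ with $\lambda\neq 0$ on $Y$, one checks that both $H_p\tilde f$ and $H_p^2\tilde f$ restricted to $\{f=H_pf=0\}$ equal $\lambda$ times the corresponding quantities for $f$), it suffices to take $f=x$. Writing $p = \xi^2 + h^{ij}(x,y)\eta_i\eta_j + V(x,y)$, direct computation gives
\[
H_p f = 2\xi, \qquad H_p^2 f = -2\,\partial_x V - 2(\partial_x h^{ij})\,\eta_i\eta_j.
\]

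The conditions $f=H_pf=0$ force $x=\xi=0$, and the characteristic constraint $p=0$ then reduces to $h^{ij}(0,y)\eta_i\eta_j = -V(0,y)$. At points $y\in Y$ with $V(0,y)>0$ there is no admissible $\eta$, so \eqref{e:curved} is vacuously satisfied there; the condition \eqref{curvature2} is likewise to be understood as a requirement only at those $y$ with $V(0,y)\le 0$ (where the characteristic set of $p$ meets $Y$), and at such points I parameterize $v:=\eta^\sharp\in T_yY$ as $v=\sqrt{-V}\,\hat v$ with $|\hat v|_{g_Y}=1$, using $|v|_{g_Y}^2 = h^{ij}\eta_i\eta_j = -V$.

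The key geometric step is to identify the quadratic form $(\partial_x h^{ij})\eta_i\eta_j|_{x=0}$ as a multiple of the second fundamental form $\secondff$ of $Y$. Using $\partial_x h^{ij} = -h^{ia}h^{jb}\partial_x h_{ab}$ and the Fermi-coordinate formula $\secondff(\partial_a,\partial_b)|_Y = \tfrac12\partial_x h_{ab}|_{x=0}$ (fixing the sign so the principal curvatures $k_j$ of a convex hypersurface oriented by its outward normal are positive), one finds $(\partial_x h^{ij})\eta_i\eta_j|_{x=0} = -2\,\secondff(v,v) = 2V\,\secondff(\hat v,\hat v)$, so
\[
\tfrac12 H_p^2 f\bigl|_{x=\xi=0,\,p=0} \;=\; -\nabla_N V \;-\; 2V\,\secondff(\hat v,\hat v).
\]

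Finally, diagonalizing the shape operator in a principal orthonormal frame $\{e_j\}$ with eigenvalues $k_j$ and expanding $\hat v = \sum_j a_j e_j$ with $\sum_j a_j^2 = 1$, I obtain $\secondff(\hat v,\hat v) = \sum_j k_j a_j^2$. As $\hat v$ varies over the unit sphere in $T_yY$, the tuple $(a_j^2)$ fills the whole standard simplex, so $\secondff(\hat v,\hat v)$ ranges over $\operatorname{conv}(k_1,\ldots,k_{d-1})$ and hence $2V\,\secondff(\hat v,\hat v)$ ranges over $\operatorname{conv}(2Vk_1,\ldots,2Vk_{d-1})$. The existence of some admissible $\hat v$ with $H_p^2 f=0$ is therefore equivalent to $-\nabla_N V\in\operatorname{conv}(2Vk_1,\ldots,2Vk_{d-1})$; taking complements gives the stated equivalence. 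The main obstacle will be the sign bookkeeping in the identification of $\secondff$ under the chosen Fermi conventions, together with the degenerate case $V(0,y)=0$, where $v=0$ is forced, the convex hull collapses to $\{0\}$, and the equivalence correctly reduces to $\nabla_N V\ne 0$.
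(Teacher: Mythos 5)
Your proof is correct, and it takes a genuinely different route from the paper. The paper passes to the Lagrangian picture: it uses the equation of motion $\nabla_{\dot\gamma}\dot\gamma = -2\nabla V$ obtained from the Legendre transform and then computes $\ddot x = \nabla_v\langle\dot\gamma,N\rangle = \langle -2\nabla V,N\rangle - \langle\secondff(v,v),N\rangle$ covariantly, never writing down the Hamilton vector field on $Y$ explicitly. You instead stay entirely on the Hamiltonian side, compute $H_p f = 2\xi$ and $H_p^2 f = -2\partial_x V - 2(\partial_x h^{ij})\eta_i\eta_j$ directly in Fermi coordinates, and then identify the metric-derivative quadratic form $(\partial_x h^{ij})\eta_i\eta_j$ with the second fundamental form. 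Both approaches rescale $v$ to a unit vector using conservation of energy ($|\eta^\sharp|^2=-V$, equivalently $|\dot\gamma|^2=-4V$) and then invoke the fact that $\secondff(\hat v,\hat v)$ sweeps out the convex hull of the principal curvatures as $\hat v$ ranges over the unit sphere in $T_yY$. The Fermi-coordinate route is arguably more self-contained and closer to the Hamiltonian formalism used throughout the rest of the paper, at the modest cost of having to be explicit about the sign convention tying $\partial_x h_{ij}$ to $\secondff$; your explicit care here (and your remark about the degenerate boundary cases $V(0,y)>0$, where the condition is vacuous, and $V(0,y)=0$, where the convex hull degenerates to $\{0\}$ and the condition reduces to $\nabla_N V\neq 0$) is in fact a welcome clarification, since these points are left implicit in the paper's argument. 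One small caveat: with the more common convention $\secondff(X,Y)=(\nabla_X Y)^\perp$ (which the paper nominally uses), the Fermi formula reads $\langle\secondff(\partial_a,\partial_b),\partial_x\rangle=-\tfrac12\partial_x h_{ab}$, the opposite sign to what you wrote; you are of course free to fix the opposite convention as you do, but it is worth flagging that the stated form of \eqref{curvature2} is not invariant under flipping that convention alone (only under simultaneously flipping $N$), so pinning it down as you do is not optional.
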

Note that a change of orientation of $Y$ changes the signs of both $N$ and of the principal curvatures, so that the condition \eqref{curvature2} is independent of orientation. 
\begin{proof}
The condition \eqref{e:curved} is equivalent to the condition that along the projection to $M$ of the Hamilton flow on the energy surface, we never have $x=0$, $\dot x=0$, $\ddot x=0$.  

As the Legendre transform of our Hamiltonian is $L(z,\dot z)=(1/4)\abs{\dot z}^2_g-V(z)$, the equations of motion in the base read (with $\nabla$ denoting the Levi-Civita connection)
$$
\nabla_{\dot\gamma}\dot \gamma = -2\nabla V.
$$
Thus if $\gamma(0)\in Y$ with $\dot \gamma(0)=v \in TY$ (i.e., $\dot x=0$), then (with $N=\nabla x$ denoting the oriented unit normal vector field)
$$\begin{aligned}
\ddot x&= \nabla_{v} \ang{\dot \gamma,N} \\ &= \ang{\nabla_v \dot \gamma, N}+\ang{v, \nabla_v N}\\ &= \ang{-2\nabla V, N}-\ang{\secondff(v,v),N},
\end{aligned}$$
where we have used the equation of motion (and the fact that $v=\dot \gamma$) in the final equality, and where $\secondff$ denotes the second fundamental form.

Thus, the condition \eqref{e:curved} is now equivalent to
\begin{equation}\label{tangency}
\ang{\secondff(v,v)+2 \nabla V,N} \neq 0
\end{equation}
for $v=\dot \gamma \in TY$.  Now owing to conservation of the Hamiltonian, we have $\abs{v}_g^2=-4V$, hence the equation \eqref{tangency} is equivalent to 
$$
\ang{2 V \secondff(\hat v, \hat v)-\nabla V,N} \neq 0
$$
where $\hat v$ is the unit vector in direction $v$.  The range of $\ang{\secondff(\bullet,\bullet), N}$ on the unit tangent space is the convex hull of the principal curvatures, hence the condition is that $\ang{\nabla V, N}$ not lie in the convex hull of $2V$ times these values.
\end{proof}
\begin{lemma}\label{l:gl}
Suppose that $V\in W^{2,1}(\mathbb{R}_x;C^\infty(\mathbb{R}^{n-1}_y))$ and $\rho_0=(x_0,\xi_0,y_0,\eta_0)$ satisfies 
$$
x_0=0,\ \xi_0=0,\qquad (H_p^2x)(\rho_0)=(H_p\xi)(\rho_0)\neq 0,
$$
then there is a neighborhood of $t=0$ such that the solution to~\eqref{hamvf} with initial condition $(x_0,\xi_0,y_0,\eta_0)$ exists and is unique.
\end{lemma}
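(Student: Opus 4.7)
The approach I would take is to adapt the proof of Lemma~\ref{l:hyp} by changing the reparametrization: since by hypothesis $(H_p\xi)(\rho_0)=\partial_x r(\rho_0)\equiv F_0\neq 0$, the variable $\xi$ is strictly monotonic in $t$ in a neighborhood of $\rho_0$ and may serve as the independent variable in place of $x$. Writing $F=\partial r/\partial x$, $G=\partial r/\partial y$, $H=\partial r/\partial \eta$, the system \eqref{hamvf} becomes
\begin{equation*}
\frac{dt}{d\xi}=\frac{1}{F},\qquad \frac{dx}{d\xi}=\frac{2\xi}{F},\qquad \frac{dy}{d\xi}=-\frac{H}{F},\qquad \frac{d\eta}{d\xi}=\frac{G}{F}.
\end{equation*}
Under $V\in W^{2,1}(\mathbb{R}_x;C^\infty(\mathbb{R}^{n-1}_y))$, 1D Sobolev embedding yields $V\in C^1(\mathbb{R}_x;C^\infty(\mathbb{R}^{n-1}_y))$, so $F,G,H$ are continuous in $(x,y,\eta)$ and $F$ stays bounded below by $|F_0|/2$ in a neighborhood of $\rho_0$. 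Peano's theorem then gives an absolutely continuous solution on some interval $\xi\in(-\varepsilon,\varepsilon)$, yielding existence.

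For uniqueness, I would reduce the dimension by using conservation of the Hamiltonian $p=\xi^2-r$: along any absolutely continuous integral curve one computes $\dot p=0$ a.e.\ directly from \eqref{hamvf}, and $p(\rho_0)=0$, so the trajectory satisfies the algebraic relation $\xi^2=r(x,y,\eta)$. Because $\partial_x r\neq 0$ and $r\in C^1$ near $\rho_0$, the implicit function theorem produces a $C^1$ function $X(s,y,\eta)$ with $r(X,y,\eta)=s$ and $X(0,y_0,\eta_0)=0$. Implicit differentiation of $r(X,y,\eta)=s$ in $s,y,\eta$ yields the identities
\begin{equation*}
\left.\frac{1}{F}\right|_X=\partial_s X,\qquad \left.-\frac{H}{F}\right|_X=\partial_\eta X,\qquad \left.\frac{G}{F}\right|_X=-\partial_y X,
\end{equation*}
reducing the reparametrized system to the Hamiltonian system
\begin{equation*}
\frac{dy}{d\xi}=\partial_\eta X(\xi^2,y,\eta),\qquad \frac{d\eta}{d\xi}=-\partial_y X(\xi^2,y,\eta),\qquad \frac{dt}{d\xi}=\partial_s X(\xi^2,y,\eta),
\end{equation*}
together with the relation $x=X(\xi^2,y,\eta)$.

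The final step is to apply a Carathéodory-type uniqueness theorem to this reduced system, for which one must verify that the Lipschitz constant of its right-hand side in $(y,\eta)$ is locally integrable in $\xi$. Away from $\xi=0$ the trajectory lies in $\{x>0\}$ where $V$ is smooth and classical Lipschitz estimates are immediate; the technical core is the integrability at $\xi=0$. The relevant Lipschitz constants are controlled by second derivatives of $X$, hence by $\partial_x F=\partial_x^2 r$, which by the $W^{2,1}$ hypothesis lies in $L^1_x$. The crucial input is the geometry of the second-order tangency: it forces $X(\xi^2,y_0,\eta_0)\sim \xi^2/F_0$ near $\xi=0$, so the change of variables $x\leftrightarrow \xi^2$ converts an $L^1_x$ bound on $\partial_x^2 V$ (along the trajectory) into an $L^1_\xi$ bound on the Lipschitz modulus of the reduced RHS. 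This is precisely where the $W^{2,1}$ hypothesis enters: it is weaker than $W^{2,\infty}$ (which would give pointwise Lipschitz bounds directly), but suffices when combined with the quadratic scaling. Uniqueness of $(t,y,\eta)(\xi)$ then forces uniqueness of $x=X(\xi^2,y(\xi),\eta(\xi))$, completing the argument. The main obstacle is precisely this integrability verification, which is the key technical point marrying the $W^{2,1}$ regularity to the second-order glancing geometry.
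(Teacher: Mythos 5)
Your reduction is an attractive and genuinely different route from the paper: since $W^{2,1}\subset C^1$, $\dot\xi=\partial_x r$ is nonvanishing near the glancing point, so $\xi$ may serve as the independent variable, and conservation of $p=\xi^2-r$ together with the implicit function theorem lets you eliminate $x$ and pass to a reduced Hamiltonian system for $(y,\eta)$ with Hamiltonian $-X(\xi^2,y,\eta)$; the implicit-differentiation identities check out. However, the final uniqueness step contains a genuine gap. To invoke Carath\'eodory uniqueness you need the Lipschitz modulus $\ell(\xi):=\sup_{y,\eta}\bigl|\nabla^2_{(y,\eta)}X(\xi^2,y,\eta)\bigr|$, which is controlled by $\sup_{y,\eta}|\partial_x^2 r(X(\xi^2,y,\eta),y,\eta)|$, to lie in $L^1_\xi$. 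The quadratic scaling does \emph{not} convert the $L^1_x$ hypothesis into this: since $x=X(\xi^2,\cdot)\approx\xi^2/F_0$, one has $d\xi\approx\tfrac12\sqrt{F_0/x}\,dx$, so
\begin{equation*}
\int_0^\delta\ell(\xi)\,d\xi\;\approx\;\int_0^{\delta^2/F_0}\sup_{y,\eta}\bigl|\partial_x^2r(x,y,\eta)\bigr|\,\frac{\sqrt{F_0}}{2\sqrt{x}}\,dx,
\end{equation*}
and the Jacobian works \emph{against} you, introducing an $x^{-1/2}$ weight. For instance $\partial_x^2 r\sim|x|^{-2/3}$ is in $L^1_x$ but not in $L^1(x^{-1/2}dx)$, so the Carath\'eodory integrability condition can fail for $V\in W^{2,1}$.

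This is precisely the obstruction the paper's proof is built to circumvent. Rather than seeking an unweighted $L^1$ Lipschitz bound, the paper first establishes the a priori estimate $|\rho_1'(t)-\rho_2'(t)|\lesssim t^2R_1(t)$ for any two solutions, with $R_1(t)\to 0$, and then runs a Gr\"onwall iteration weighted by $t^{-2}R_1(t)^{-1}$. When the change of variables $w\mapsto z=s'x_1(w)+(1-s')x_2(w)$ is performed (the analogue of your $x\leftrightarrow\xi^2$), the weight contributes a factor $w^2R_1(w)$ that not only cancels the $1/|w|$ Jacobian singularity but leaves behind the small constant $tR_1(t)$ needed to close the contraction. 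That extra vanishing factor, coming from the a priori comparison of the two solutions, is exactly what is absent from your Carath\'eodory reduction. Your scheme could likely be repaired by running an analogous weighted contraction on the reduced $(y,\eta)$ system, but as written the appeal to Carath\'eodory uniqueness as a black box is not justified.
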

\begin{proof} 
First, notice that
$$
(H_p\xi )(\rho_0)=\partial_x r(\rho_0)\neq 0.
$$
For $t>0$, define
\begin{gather*}
R_1(t):=\int_{-2|\partial_x r(\rho_0)|t^2}^{2|\partial_x r(\rho_0)|t^2}\sup_{y,\eta}(|\partial^2_x r(s,y,\eta)|)ds +|t|.
\end{gather*}
Observe that since $r\in W^{2,1}$, 
\begin{equation}
\label{e:to0}
\lim_{t\to 0}R_1(t)=0.
\end{equation}
Also note that $R_1(t)$ is increasing and strictly positive for $t>0$.

Since the right-hand side of~\eqref{hamvf} is continuous, absolutely continuous solutions to~\eqref{hamvf} exist; it remains to prove uniqueness.

Suppose that $\rho(t):=(\rho'(t),\xi(t))$ with $\rho'(t)=(x(t),y(t),\eta(t))$ is an absolutely continuous solution of~\eqref{hamvf} with $(x(0),\xi(0),y(0),\eta(0))=(0,0,y_0,\eta_0)=:\rho_0$. Then we claim that $\rho'(t)\in \mc{C}^{2}$, $\xi(t)\in \mc{C}^{1}$, and
\begin{equation}
\label{e:aprioriSol}
\begin{aligned}
x(t)&=\partial_xr(\rho_0)t^2+O(t^2R_1(t)),\\
\xi(t)&=\partial_xr(\rho_0)t+O(tR_1(t))\\
y(t)&=y_0-\partial_\eta r(\rho_0)t+\frac{t^2}{2}(\partial^2_{y\eta}r(\rho_0)\partial_\eta r(\rho_0)-\partial^2_{\eta}r(\rho_0)\partial_yr(\rho_0))+O(t^2R_1(t)),\\
\eta(t)&=\eta_0+\partial_yr(\rho_0)t+\frac{t^2}{2}(\partial^2_{y\eta}r(\rho_0)\partial_yr(\rho_0)-\partial^2_{y}r(\rho_0)\partial_\eta r(\rho_0))+O(t^2R_1(t)).
\end{aligned}
\end{equation}

First, observe that 
$$
\rho(t)=\rho(0)+\int_0^tF(\rho(s))ds,
$$
with $F\in W^{1,1}$. Therefore, since $\rho$ is continuous $\dot \rho\in \mc{C}^{0}$, and hence $\rho\in \mc{C}^{1}$. Next, observe that since $\xi\in \mc{C}^1$,
$$
\xi(t)=\partial_xr(\rho_0)t+o(t),
$$

Next, we consider $x(t)$. First observe that 
\begin{align*}
x(t)&=\int_0^t 2\xi(s)ds=\partial_xr(\rho_0)t^2+o(t^2),
\end{align*}
and, since $\xi\in \mc{C}^1$, $x(t)\in \mc{C}^2$.

We now prove the error estimates in~\eqref{e:aprioriSol}. Start by observing that
\begin{align*}
\xi(t)&=\int_0^t\partial_xr(\rho(w))dw\\
&=t\partial_xr(\rho_0)+\int_0^t\int_0^w \langle \nabla \partial_xr(x(s),y(x),\eta(s)),\dot\rho(s)\rangle \, ds\, dw\\
&=t\partial_xr(\rho_0)+\int_0^t\int_0^w \langle \nabla_{(y,\eta)} \partial_xr(x(s),y(s),\eta(s)),(\dot y(s),\dot \eta(s))\rangle \, ds\, dw\\
&\qquad +\int_0^t\int_0^w\partial^2_xr(x(s),y(s),\eta(s))\dot{x}(s)\, ds\, dw\\
&=t\partial_xr(\rho_0)+\int_0^t\int_0^w \langle \nabla_{(y,\eta)} \partial_xr(x(s),y(s),\eta(s)),(\dot y(s),\dot \eta(s))\rangle dsdw\\
&\qquad +\int_0^t\int_0^{x(w)}\partial^2_xr(z,y(s(z)),\eta(s(z)))\, dz\, dw.
\end{align*}
Here $s:[0,x(t)]\to [0,t]$ is the inverse of the map $x:[0,t]\to [0,x(t)]$. 
Therefore, for $|t|$ small enough, since $\rho\in \mc{C}^1$ and $|x(w)|= |\partial_x r(\rho_0)|w^2+o(w^2)$, 
\begin{equation}\label{R1}
\big|\xi(t)-t\partial_xr(\rho_0)\big|\leq CtR_1(t).
\end{equation}

Furthermore,
\begin{align*}
x(t)&=\int_0^t 2\int_0^s\partial_xr(\rho(w))dwds\\
&=t^2\partial_xr(\rho_0)+\int_0^t 2\int_0^s\int_0^w \langle \nabla \partial_xr(x(s'),y(s'),\eta(s')),\dot\rho(s')\rangle ds'dwds.
\end{align*}
Therefore, arguing as above and using again that $\rho\in \mc{C}^1$, 
$$
\big|x(t)-t^2\partial_xr(\rho_0)\big|\leq Ct^2R_1(t)
$$

For $y(t)$ we write
\begin{align*}
y(t)&=y_0-\int_0^t\partial_{\eta} r(\rho(s))ds\\
&=y_0-\partial_\eta r(\rho_0)t+\int_0^t\int_0^{w}\partial^2_{y\eta}r(\rho(s))\partial_\eta r(\rho(s))-\partial^2_{x\eta}r(\rho(s))2\xi(s)-\partial^2_{\eta}r(\rho(s))\partial_yr(\rho(s))\, ds\, dw.
\end{align*}
As $\partial_yr,\partial_\eta r,\partial^2_{y\eta}r,\partial^2_{\eta}r\in \mc{C}^{1}$, $\partial^2_{x\eta}r\in \mc{C}^{0}$, $\xi\in \mc{C}^{1}$, one then easily checks that $y\in \mc{C}^{2}$, and that the equation for $y(t)$ in~\eqref{e:aprioriSol} holds. The argument for $\eta(t)$ is identical.

We are now in a position prove the uniqueness of our solution. Suppose that $\rho_1(t)=(\rho_1'(t),\xi_1(t))$ and $\rho_2(t)=(\rho_2'(t),\xi_2(t))$ solve~\eqref{hamvf} with $\rho_1(0)=\rho_2(0)=(0,y_0,\eta_0,0)$, 
Then
\begin{equation}
\label{e:xi}
\begin{aligned}
|\dot\xi_1-\dot\xi_2|(t)&\leq |\partial_xr(\rho_1'(t))-\partial_xr(\rho_2'(t))|\\
&\leq \int_0^1   \partial^2_xr(\rho_1'(t)s+(1-s)\rho_2'(t))(x_1(t)-x_2(t))ds\\
&\qquad + \int_0^1 \langle \nabla_{(y,\eta)} \partial_xr(\rho_1'(t)s+(1-s)\rho_2'(t)),(y_1,\eta_1)(t)-(y_2,\eta_2)(t)\rangle ds\\
&\leq \Big(\int_0^1   |\partial^2_xr(\rho_1'(t)s+(1-s)\rho_2'(t))|ds + C\Big)|\rho_1'(t)-\rho_2'(t)|
\end{aligned}
\end{equation}
Next, observe that, since $|\partial (\partial_yr,\partial_\eta r)|\leq C$, 
\begin{align*}
|\dot \rho_1'(t)-\dot\rho_2'(t)|&\leq 2|\xi_1(t)-\xi_2(t)|+|(\partial_y r,\partial_\eta r)(\rho_1'(t))-(\partial_y r,\partial_\eta r)(\rho_2'(t))|\\
&\leq 2|\xi_1(t)-\xi_2(t)|+C|\rho_1'(t)-\rho_2'(t)|.
\end{align*}

In particular, for $0\leq t<1$, 
\begin{equation}
\label{e:estimateRho}
\begin{aligned}
&|\rho_1'(t)-\rho_2'(t)|e^{-Ct}\\
&\leq 2\int_0^te^{-Cs}|\xi_1(s)-\xi_2(s)|ds\\
&\leq  2\int_0^te^{-Cs}\int_0^s \int_0^1\Big(|\partial_x^2r(\rho_1'(w)s'+\rho_2'(w)(1-s'))|+C\Big)|\rho_1'(w)-\rho_2'(w)|\, ds'\, dw\, ds \\
&\leq  C\int_0^1\int_0^t\int_0^s \Big(|\partial_x^2r(\rho_1'(w)s'+\rho_2'(w)(1-s'))|+C\Big)|\rho_1'(w)-\rho_2'(w)|w^{-2}R_1^{-1}(w)w^2R_1(w)dw\, ds\, ds' \\
&\leq  C\||\rho_1'(\cdot)-\rho_2'(\cdot)|(\cdot)^{-2}R_1^{-1}(\cdot)\|_{L^\infty(0,t)}\int_0^1\int_0^t\int_0^s \Big(|\partial_x^2r(\rho_1'(w)s'+\rho_2'(w)(1-s'))|+C\Big)w^2R_1(w)dw\, ds\, ds'.
\end{aligned}
\end{equation}
for the final inequality, note that
\eqref{e:aprioriSol} implies that $\||\rho_1'(\cdot)-\rho_2'(\cdot)|(\cdot)^{-2}R_1^{-1}(\cdot)\|_{L^\infty(0,t)}\leq C<\infty$.
Now change variables, replacing $w$ by $z=s'x_1(w)+(1-s')x_2(w)$; let $w(z)$ denote the inverse map.  We further split $\rho'=(x,\rho'')$, i.e., $\rho''=(y,\eta)$.  Since 
$$
\inf_{s'\in[0,1]}|s'\dot{x}_1(w)+(1-s')\dot{x}_2(w)|\geq c|w|,
$$
we obtain
\begin{align*}
&\int_0^t\int_0^s \Big(|\partial_x^2r(\rho_1'(w)s'+\rho_2'(w)(1-s'))|w^2R_1(w)\, dw\, ds\\
&=\int_0^t\int_0^{x(s)} \Big(|\partial_x^2r(z,\rho_1''(w(z))s'+\rho_2''(w(z))(1-s'))|\frac{|w(z)|^2R_1(w(z))}{|s'\dot{x}_1(w(z))+(1-s')\dot{x}_2(w(z))|}\, dz\, ds\\
&\leq CtR_1(t)\int_0^t\int_0^{x(s)} |\partial_x^2r(z,\rho_1''(w(z))s'+\rho_2''(w(z))(1-s'))|\, dz\, ds\\
&\leq Ct^2[R_1(t)]^2.
\end{align*}

Using this in~\eqref{e:estimateRho} shows that for $0\leq t<1$,
\begin{equation}
\label{e:rhopEst}
|\rho_1'(t)-\rho_2'(t)|t^{-2}R_1^{-1}(t)\leq C_0\||\rho_1'(\cdot)-\rho_2'(\cdot)|(\cdot)^{-2}R^{-1}_1(\cdot)\|_{L^\infty(0,t)}R_1(t).
\end{equation}
 Hence~\eqref{e:rhopEst} and~\eqref{e:to0} implies that 
\begin{equation}
\label{e:left0}
\lim_{t\to 0^+}|\rho_1'(t)-\rho_2'(t)|t^{-2}R_1^{-1}(t)=0.
\end{equation}

Let 
$$
t_0:=\inf\{t>0\,:\, R_1(t)>C_0^{-1}\},
$$
and suppose there is $0<t_*<t_0$, with  $|\rho_1'(t_*)-\rho_2'(t_*)|>0$ ($t_0<\infty$ by~\eqref{e:to0}). Then, since $\rho_1',\rho_2'$ are continuous and \eqref{e:left0} holds, there is $0<t_{m}\leq t_*$ such that 
\begin{align*}
0<\||\rho_1'(\cdot)-\rho_2'(\cdot)|(\cdot)^{-2}R_1^{-1}(\cdot)\|_{L^\infty(0,t_*)}&=|\rho'_1(t_{m})-\rho'_2(t_{m})|t_{m}^{-2}R_1^{-1}(t_m)\\
&\leq C_0\||\rho_1'(\cdot)-\rho_2'(\cdot)|(\cdot)^{-2}R_1^{-1}(\cdot)\|_{L^\infty(0,t_{m})}R_1(t_m)\\
&\leq  C_0\||\rho_1'(\cdot)-\rho_2'(\cdot)|(\cdot)^{-2}R_1^{-1}(\cdot)\|_{L^\infty(0,t_{*})}R_1(t_*).
\end{align*}
Dividing by $\||\rho_1'(\cdot)-\rho_2'(\cdot)|(\cdot)^{-2}R_1^{-1}(\cdot)\|_{L^\infty(0,t_{*})}$, we obtain 
$$
1<C_0R_1(t_*),
$$
which is a contradiction since $R_1(t_*)<C_0^{-1}$. 

Thus, $\rho_1'(t)=\rho_2'(t)$ on $[0,t_0]$ and hence, from~\eqref{e:xi}, we have $\rho_1(t)=\rho_2(t)$ for $t\in [0,t_0]$. An identical argument applies for $t\in [-t_0,0]$.

\end{proof}

\section{Semiclassical preliminaries}

\subsection{Defect measures}

We recall here the notion of a defect measure. Let $h_n\to 0$ and $\{u(h_n)\}_{n=1}^\infty\subset L^2(M)$. For a Radon measure $\mu$ on $T^*M$, we say that \emph{$\mu$ is a defect measure for the family $\{u(h_n)\}$} if there exists a subsequence $h_{n_k}$ such that for all $a\in C_c^\infty(T^*M)$,
\begin{equation}\label{pure}
\lim_{j\to \infty }\langle \op (a)u(h_{n_k}),u(h_{n_k})\rangle _{L^2(M)}= \int a\, d\mu.
\end{equation}

We now recall the following fact about existence of defect measures (See~\cite[Theorem 5.2]{Zw:12}).
\begin{lemma}
\label{l:existence}
Suppose that $\sup_n\|u(h_n)\|_{L^2}<\infty$. Then there is a subsequence $n_k\to \infty$ and a positive Radon measure $\mu$ such that $u(h_{n_k})$ has defect measure $\mu$. 
\end{lemma}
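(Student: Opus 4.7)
The plan is to follow the standard Riesz--Markov extraction argument. First, I would set up the sequence of linear functionals $\Lambda_n : C_c^\infty(T^*M) \to \mathbb{C}$ defined by
$$
\Lambda_n(a) := \langle \op(a) u(h_n), u(h_n)\rangle_{L^2(M)}.
$$
The Calder\'on--Vaillancourt theorem (applied in the semiclassical calculus) yields $\|\op(a)\|_{L^2 \to L^2} \leq C \|a\|_{L^\infty} + O(h)$, uniformly in $h$ for $a$ in a bounded subset of $C_c^\infty$ with fixed compact support, so with $C_0 := \sup_n \|u(h_n)\|_{L^2}^2 < \infty$ we get
$$
|\Lambda_n(a)| \leq C\, C_0 \|a\|_{L^\infty(T^*M)} + o(1)
$$
as $h_n \to 0$, uniformly for $a$ in any such bounded family.

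Next I would pick a countable subset $\{a_k\}_{k \geq 1} \subset C_c^\infty(T^*M)$ that is dense in $C_c(T^*M)$ in the uniform norm (arranging, say, that the $a_k$ sit inside an exhausting sequence of compact subsets of $T^*M$). Since for each fixed $k$ the sequence $\Lambda_n(a_k)$ is bounded in $\mathbb{C}$, a standard Cantor diagonal extraction produces a subsequence $n_k \to \infty$ along which $\Lambda(a_k) := \lim_k \Lambda_{n_k}(a_k)$ exists for every $a_k$. Combined with the uniform bound above, $\Lambda$ extends uniquely to a bounded linear functional on $C_c(T^*M)$ with $|\Lambda(a)| \leq CC_0 \|a\|_{L^\infty}$, and an $\e/3$ argument shows that \eqref{pure} holds along this subsequence for every $a \in C_c^\infty(T^*M)$ (indeed every $a \in C_c(T^*M)$).

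To upgrade this to a \emph{positive} Radon measure, I would invoke the semiclassical sharp G\r{a}rding inequality: if $a \in C_c^\infty(T^*M)$ with $a \geq 0$, then
$$
\langle \op(a) v, v\rangle_{L^2} \geq -C h \|v\|_{L^2}^2
$$
for all $v \in L^2(M)$ and small $h$. Applying this with $v = u(h_{n_k})$ and passing to the limit gives $\Lambda(a) \geq 0$ for all nonnegative $a \in C_c^\infty$, hence for all nonnegative $a \in C_c(T^*M)$ by density. The Riesz--Markov representation theorem then produces a positive Radon measure $\mu$ on $T^*M$ with $\Lambda(a) = \int a\, d\mu$, which is exactly the required defect measure.

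The only subtlety is the interplay between the $C^\infty_c$ space used to define $\Lambda_n$ and the $C_c$ space on which the Riesz theorem applies; this is handled precisely by the $L^\infty$-type Calder\'on--Vaillancourt bound, which allows a continuous extension and shows that the subsequence works uniformly against test functions beyond the initial countable dense family. There is no real obstacle beyond routine bookkeeping, since the result is the standard existence theorem for semiclassical defect measures (e.g.\ \cite[Theorem 5.2]{Zw:12}).
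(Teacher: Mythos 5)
Your argument is correct and is exactly the standard extraction-plus-G\r{a}rding proof of the semiclassical defect-measure existence theorem, which is the route taken in the reference the paper cites (\cite[Theorem 5.2]{Zw:12}); the paper itself offers no proof beyond that citation. The only minor imprecision is the claimed power of $h$ in the Calder\'on--Vaillancourt remainder (it is $O(h^{1/2})$ rather than $O(h)$ in the usual formulation), but this does not affect the argument since either vanishes as $h\to 0$ and suffices for both the $\e/3$ extension and the positivity step.
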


\begin{remark}
By Lemma~\ref{l:existence}, defect measures exist for bounded families; they have no reason in general to be unique, however. Our results are formulated to apply to \emph{any} defect measure associated to a family of solutions to the Schr\"odinger equation.  By
Lemma~\ref{l:existence}, though, we may as well pass to a subsequence which is \emph{pure} in the sense that \eqref{pure} holds.  
We will do this freely from now on, henceforth restricting our attention to the pure subsequence.

Moreover, to ease notation below, we will often go further and drop the sequence notation entirely to simply say that $u$ has defect measure $\mu$, leaving the sequence implicit.
\end{remark}

\subsection{Tangential operators}
When analyzing our operators in the hyperbolic region, we will have cause to use a family of tangential pseudodifferential operators which we define here. For a concise treatment of semiclassical pseudodifferential operators, semiclassical wavefront set, and microsupport, we refer the reader to \cite[Appendix E]{DyZw:19}.
\begin{definition}
Given $\mc{W}$ a normed space of functions on $\mathbb{R}$, we write 
$$
\psit{\mc{W}}^m:=\{ \mc{W}(\mathbb{R}_x;\Psi^m(\mathbb{R}_y^{d-1}))\}.
$$
We let $\psit{\mc{W}}^{\comp}$ denote those families $A(x)$ where $\WFh (A(x))$ lies in a fixed compact subset of $T^* \mathbb{R}_y$ for all $x$. 

For symbols $a\in \mc{W}(\mathbb{R};S^m(\mathbb{R}^{2(d-1)})$, let
$$
\opt(a)u(x,y)=\frac{1}{(2\pi h)^{d-1}}\int e^{\frac{i}{h}\langle y-y',\eta\rangle}a(x,y,\eta)u(x,y)d\eta dy.
$$
We also define the symbol map $\sigmat:\psit{\mc{W}}^m\to \mc{W}(\mathbb{R};S^m(\mathbb{R}^{2(d-1)}))$ by 
$$
\sigmat(A)(x,y,\eta)=\sigma(A(x))(y,\eta).
$$
\end{definition}

We will require the following result about composing tangential operators with ordinary semiclassical pseudodifferential operators with compactly supported symbols.
\begin{lemma}\label{l:tangentialcomposition}
Let $a \in \CI_c((-\ep,\ep); \CI_c(T^*\RR^{d-1}))$ and $\chi\in \mathcal{C}_c^\infty(T^*M)$.  Then
$$
\opt(a) \op(\chi) \in \Psi_h^{\comp}
$$
and this operator has principal symbol $a\chi$.
\end{lemma}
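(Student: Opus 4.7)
The plan is to realize $\opt(a)$ as a standard left-semiclassical quantization of a symbol that happens to be constant in the fiber variable $\xi$ dual to $x$, and then invoke the usual symbolic calculus in $\Psi_h$. Specifically, I would set $\tilde a(x,y,\xi,\eta) := a(x,y,\eta)$ and check that performing the $\xi$-integration in the full left quantization $\op(\tilde a)$ produces $(2\pi h)\delta(x-x')$, which, after integrating out $x'$, reduces that quantization exactly to the tangential formula defining $\opt(a)$. Hence $\opt(a) = \op(\tilde a)$ as operators on $M$ (working in the coordinate patch where $a$ is supported).

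Next I would note that, although $\tilde a$ is compactly supported in $(x,y,\eta)$, it fails to lie in $S^{\comp}(T^*\RR^d)$ because it does not decay in $\xi$; it only lies in $S^0$. The missing compactness in $\xi$ is supplied by $\chi$: the standard composition theorem for semiclassical pseudodifferential operators gives
$$\op(\tilde a)\,\op(\chi) \;=\; \op(\tilde a \# \chi),$$
with Moyal asymptotic expansion
$$\tilde a \# \chi \;\sim\; \tilde a\,\chi + \sum_{k\geq 1} h^k\, c_k(\tilde a,\chi),$$
where each $c_k$ is a bidifferential polynomial of combined order $k$ in $\tilde a$ and $\chi$. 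Every term in this expansion is supported in $\supp\chi$: the leading term because $\tilde a\chi$ vanishes outside $\supp\chi$, and each higher-order term because every monomial making it up contains at least one derivative of either factor, and any such derivative either carries $\chi$ (hence is supported in $\supp\chi$) or is a $\xi$-derivative of $\tilde a$ (which is zero). Consequently $\tilde a \# \chi \in S^{\comp}(T^*M)$, and so $\opt(a)\,\op(\chi) \in \Psi_h^{\comp}$.

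The principal symbol is then read off as the leading Moyal term, $\tilde a\,\chi = a\chi$, which is exactly the claim. The main (and quite modest) obstacle is the bookkeeping around the fact that $\tilde a \notin S^{\comp}$: one must verify that every term of the Moyal expansion, as well as the Borel-summed $O(h^\infty)$ remainder, inherits compact microsupport from $\chi$ rather than from $a$. Once this is checked, the conclusion is immediate.
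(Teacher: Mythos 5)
Your approach is the same as the paper's: both identify $\opt(a)$ with the standard quantization $\op(\tilde a)$ of the $\xi$-independent symbol $\tilde a(x,y,\xi,\eta)=a(x,y,\eta)\in S(1)$, then invoke the ordinary semiclassical composition calculus. You merely flesh out a point the paper leaves implicit, namely that every term of $\tilde a\#\chi$ (and the Borel-summed remainder) inherits compact support in phase space from $\chi$, so the product lies in $\Psi_h^{\comp}$ and not just $\Psi_h^0$; this bookkeeping is correct and is exactly what the paper's brief reference to the $S(1)$ calculus is meant to cover.
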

\begin{proof}
This result follows from writing 
$$
\opt(a) = \op (a(x,y, \eta))
$$
where we view $a$ as a constant symbol in the $\xi$ variable (dual to $x$).  The composition is then an ordinary composition of pseudodifferential operators with symbols in $S(1)$ as in \cite[Section 4.4]{Zw:12}.
\end{proof}

\section{Technical estimates}
\subsection{Elliptic Estimates}
We start by giving elliptic estimates when the potential $V$ is only continuous.
\begin{lemma}
\label{l:elliptic}
Let $P=-h^2\Delta_g+V$ with $V\in \mathcal{C}^{0}$, and $p=|\xi|_g^2+V$.
Then for all $A\in \Psi_h^2(M)$ with $\WFh(A)\subset \{p\neq 0\}$, there is $C>0$ such that for all $u\in L^2(M)$ with $\limsup_{h\to 0}\|u\|_{L^2}<\infty$,
$$
\limsup_{h\to 0}\|Au\|_{L^2}\leq C\limsup_{h\to 0}\|Pu\|_{L^2}.
$$
\end{lemma}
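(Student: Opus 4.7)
The strategy is the standard semiclassical elliptic parametrix argument, combined with a mollification of the rough potential and a careful ordering of the limits in $h$ and the mollification parameter $\epsilon$.

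First, choose $\chi\in\CI_c(T^*M)$ with $\chi\equiv 1$ on a neighborhood of $\WFh(A)$ and $\supp\chi\subset\{p\neq 0\}$; by continuity, $\inf_{\supp\chi}|p|\geq c_0>0$. Mollify to obtain $V_\epsilon\in\CI(M)$ with $\|V-V_\epsilon\|_{L^\infty}\to 0$ as $\epsilon\to 0^+$, and set $P_\epsilon:=-h^2\Lap_g+V_\epsilon$, whose principal symbol $p_\epsilon$ is smooth. For $\epsilon$ small, $|p_\epsilon|\geq c_0/2$ on $\supp\chi$, so the standard elliptic parametrix construction applied to $P_\epsilon$ produces, for any $N$, operators $B_\epsilon\in \Psi^0(M)$ with principal symbol $\chi\,\sigma_h(A)/p_\epsilon$ and remainder $E_\epsilon$ with $\|E_\epsilon\|_{L^2\to L^2}=O_\epsilon(h^N)$, such that $B_\epsilon P_\epsilon = A + E_\epsilon$. (The seminorms entering these estimates depend on derivatives of $V_\epsilon$, and hence blow up as $\epsilon\to 0$; this will be inconsequential.)

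Writing $P=P_\epsilon+(V-V_\epsilon)$ and rearranging gives
\[
A u=B_\epsilon Pu-B_\epsilon(V-V_\epsilon)u-E_\epsilon u,
\]
so that
\[
\|Au\|_{L^2}\leq \|B_\epsilon\|_{L^2\to L^2}\bigl(\|Pu\|_{L^2}+\|V-V_\epsilon\|_{L^\infty}\|u\|_{L^2}\bigr)+\|E_\epsilon u\|_{L^2}.
\]
Taking $\limsup_{h\to 0}$ with $\epsilon$ fixed, the error term vanishes (choosing $N\geq 1$), while the sharp semiclassical $L^2$ bound gives $\limsup_{h\to 0}\|B_\epsilon\|_{L^2\to L^2}\leq\|\chi\,\sigma_h(A)/p_\epsilon\|_{L^\infty}=:C_\epsilon$. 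Letting $\epsilon\to 0^+$, we have $\|V-V_\epsilon\|_{L^\infty}\to 0$, and uniform convergence $p_\epsilon\to p$ on $\supp\chi$ gives $C_\epsilon\to\|\chi\,\sigma_h(A)/p\|_{L^\infty}=:C<\infty$, yielding the asserted bound. The hypothesis $\limsup\|u\|_{L^2}<\infty$ is used to ensure the $\|V-V_\epsilon\|_{L^\infty}\|u\|_{L^2}$ contribution vanishes in the final limit.

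The main obstacle is that the mollified potential $V_\epsilon$ fails to have bounded seminorms as $\epsilon\to 0$, so the parametrix $B_\epsilon$ and error $E_\epsilon$ degenerate in that limit. The argument is therefore sensitive to the order of limits: one must first let $h\to 0$ (exploiting that for fixed $\epsilon$ the error term is $O(h^N)$ in operator norm) and only afterwards send $\epsilon\to 0$; the key point is that the $L^2$ operator norm of $B_\epsilon$ in the $h\to 0$ limit is controlled purely by the sup norm of its principal symbol, which remains uniformly bounded as $\epsilon\to 0$.
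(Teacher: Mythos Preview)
Your proof is correct and follows essentially the same approach as the paper: mollify $V$ to a smooth $V_\epsilon$, apply the standard semiclassical elliptic estimate for the smooth operator $P_\epsilon$, write $P_\epsilon u = Pu + (V_\epsilon - V)u$, take $\limsup_{h\to 0}$ first, and then send $\epsilon\to 0$. The paper simply cites the elliptic estimate as a black box (\cite[Theorem~E.33]{DyZw:19}) rather than constructing the parametrix $B_\epsilon$ explicitly, but the content is the same; your version is a bit more careful in tracking the $\epsilon$-dependence of the constant via the sup norm of the principal symbol of $B_\epsilon$, which the paper leaves implicit.
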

\begin{proof}
Since $V\in \mathcal{C}^{0}$, there is $V_\e\in C^\infty$ such that 
$$
 \lim_{\e\to 0^+}\|V_\e - V\|_{\mathcal{C}^{0}}=0.
$$
Indeed, one can construct $V_\e$ locally as follows. Let $\{\chi_i\}_{i=1}^N\subset C_c^\infty(M)$ be a partition of unity on $M$ with $\supp \chi_i\subset U_i$ and $(\psi_i:U_i\to \mathbb{R}^d,U_i)$ a coordinate system on $M$. Then, let $\phi \in C_c^\infty(\mathbb{R}^d)$ with $\int \phi=1$, define $\phi_\e(x):=\e^{-d}\phi(\e^{-1}x)$, and put 
$$
V_\e:= \sum_i\big[(\chi_i V)\circ \psi_i^{-1}*\phi_\e\big]\circ \psi_i
$$

Now, let $P_\e=-h^2\Delta_g+V_\e$. Then, for $A$ with $\WFh(A)\subset \{p\neq 0\}$, we have for $\e$ small enough $\WFh(A)\subset \{|p_\e|=||\xi|_g^2+V_\e|>c>0\}$ and hence, by the standard elliptic estimate~\cite[Theorem E33]{DyZw:19},
\begin{align*}
\limsup_{h\to 0}\|Au\|_{L^2}&\leq C\limsup_{h\to 0}\|P_\e u\|_{L^2}\\
&\leq C\limsup_{h\to 0}\|Pu\|_{L^2}+C\limsup_{h\to 0}\|(V-V_\e)u\|_{L^2}\\
&\leq C\limsup_{h\to 0}\|Pu\|_{L^2}+C\|V-V_\e\|_{\mathcal{C}^0}\limsup_{h\to 0}\|u\|_{L^2}.
\end{align*}
Since the left-hand side is independent of $\e>0$, this implies the lemma after sending $\e\to 0$.

 \end{proof}

 Before stating our next lemma, we recall that a pure sequence is one along which $\ang{\op a u_h,u_h}$ converges to $\mu(a)$ for a unique defect measure.


\begin{lemma}
\label{l:ellipticU}
Suppose that $V\in \mathcal{C}^0$, $\|u\|_{L^2}\leq C$ and $Pu=(-h^2\Delta_g+V)u=o(1)_{L^2}$. Then there is $\chi \in C_c^\infty(T^*M)$ such that 
\begin{equation}
\label{e:prelimElliptic}
\|\op(1-\chi)u\|_{H_h^2}=o(1). 
\end{equation}
Furthermore, if $u_h$ is a pure sequence with defect measure $\mu$, then $\supp \mu\subset \{p=0\}$ and 
$$
\mu(\{p=0\})=\lim_{h\to 0}\|u\|_{L^2}^2.
$$
\end{lemma}
\begin{proof}
Let $\chi \equiv 1$ near $\{p=0\}$. Then, we apply Lemma~\ref{l:elliptic} with $A=\op(\langle\xi\rangle^2(1-\chi))$ to obtain~\eqref{e:prelimElliptic}. 

To see that $\supp \mu\subset \{p=0\}$, let $a\in C_c^\infty(T^*M)$ with $\supp a\subset \{p\neq 0\}$. Then, by Lemma~\ref{l:elliptic}
$$
\|\op(a)u\|_{L^2}\leq C\|Pu\|_{L^2}+o(1)=o(1).
$$
In particular, 
$$
|\mu(a)|=|\lim_{h\to 0} \langle \op(a)u,u\rangle|\leq C\limsup_{h\to 0}\|\op(a)u\|_{L^2}=0.
$$

Finally, to see that $\mu(\{p=0\})=\lim_{h\to 0}\|u\|_{L^2}$, observe that, by~\eqref{e:prelimElliptic}
$$
\limsup_{h\to 0}\|u\|_{L^2}^2=\limsup_{h\to 0}\langle u,u\rangle=\limsup_{h\to 0}\langle \op(\chi)u,u\rangle=\mu(\chi).
$$
Similarly, 
$$
\liminf_{h\to 0}\|u\|_{L^2}^2=\mu(\chi)
$$
and hence, $\lim_{h\to 0}\|u\|_{L^2}^2=\mu(\chi)$ which implies the final claim.
\end{proof}

\subsection{$L^\infty L^2$ estimates}
In this section and the following, we discuss the factorization of semiclassical operators in the hyperbolic set near the interface $Y$ and its consequences; for related computations we also refer the reader to \cite[Section 2]{Ch:11}. 

To begin, we explore the consequences for energy estimates of having any operator in factorized form. In this section, we consider two (potentially different) factorized operators $\Pfac_{\pm}$:
\begin{equation}\label{factorization}
\begin{gathered}
\Pfac_+= (hD_{x}-\Lambda_0)(hD_x+\Lambda_0)+ hE_{+},\\
\Pfac_-=(hD_{x}+\Lambda_0)(hD_x-\Lambda_0)+ h E_{-},
\end{gathered}
\end{equation}
where $E_-,E_+\in \psit{L^1}^1$, and $\Lambda_0\in \psit{L^\infty}^1$ with real valued principal symbol.  In practice, the operators $\Pfac_{\pm}$ will be nearly equal.

\begin{lemma}
\label{l:LinfL2}
Suppose that $\Lambda_0(x)$ is elliptic on $\WFh(u(x))$ for all $x\in (-2\e,2\e)$. Then there is $C>0$ such that 
$$
\|hD_xu\|_{L^\infty((-\ep,\ep)_x)L^2_y}+\|u\|_{L^\infty((-\ep,\ep)_x)L^2_y}\leq C(\|u\|_{H_{h}^1}+h^{-1}(\|\Pfac_+ u\|_{L^2}+\|\Pfac_- u\|_{L^2})).
$$
\end{lemma}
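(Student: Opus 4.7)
The plan is to factor $\Pfac_\pm$ via the auxiliary functions $u_\pm := (hD_x \pm \Lambda_0) u$, reducing the claim to a coupled pair of first-order energy estimates. From \eqref{factorization}, these satisfy
\begin{equation*}
(hD_x - \Lambda_0) u_+ = \Pfac_+ u - hE_+ u, \qquad (hD_x + \Lambda_0) u_- = \Pfac_- u - hE_- u.
\end{equation*}
Moreover $hD_x u = (u_+ + u_-)/2$ and, since $\Lambda_0(x)$ is elliptic on $\WFh(u(x))$, a tangential microlocal parametrix yields $u = (2\Lambda_0)^{-1}(u_+ - u_-)$ up to a smoothing remainder. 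Hence it will suffice to control $\sup_{|x|<\e}(\|u_+(x)\|_{L^2_y} + \|u_-(x)\|_{L^2_y})$ by the right-hand side of the claim.

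The key step will be a standard first-order energy identity. I would compute $\partial_x \|u_+(x)\|_{L^2_y}^2 = 2\Re \langle \partial_x u_+, u_+\rangle_{L^2_y}$ and substitute the equation for $u_+$; this yields a boundary term $\Im \langle \Lambda_0 u_+, u_+\rangle$, which by the reality of $\sigmat(\Lambda_0)$ gives $\Lambda_0 - \Lambda_0^* \in h \psit{L^\infty}^0$ and is thus bounded by $Ch\|u_+(x)\|_{L^2_y}^2$ on the microlocalization of $u$. After a standard regularization to handle zeros, this produces the pointwise inequality
\begin{equation*}
\bigl|\partial_x \|u_+(x)\|_{L^2_y}\bigr| \leq C\|u_+(x)\|_{L^2_y} + h^{-1}\|\Pfac_+ u(x)\|_{L^2_y} + \|E_+ u(x)\|_{L^2_y},
\end{equation*}
and an analogous one for $u_-$. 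Since $E_\pm \in \psit{L^1}^1$, there is $g \in L^1((-2\e, 2\e))$ with $\|E_\pm u(x)\|_{L^2_y} \leq g(x)\|u(x)\|_{H^1_{h,y}}$, and the microlocal inversion of $\Lambda_0$ gives $\|u(x)\|_{H^1_{h,y}} \leq C(\|u_+(x)\|_{L^2_y} + \|u_-(x)\|_{L^2_y})$ modulo a smoothing term.

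Setting $F(x) := \|u_+(x)\|_{L^2_y} + \|u_-(x)\|_{L^2_y}$ and adding the two inequalities will produce
\begin{equation*}
|\partial_x F(x)| \leq \bigl(C + Cg(x)\bigr)F(x) + h^{-1}\bigl(\|\Pfac_+ u(x)\|_{L^2_y} + \|\Pfac_- u(x)\|_{L^2_y}\bigr).
\end{equation*}
Because $C + Cg \in L^1((-2\e, 2\e))$, an $L^1$-coefficient Gr\"onwall inequality applies. The plan is to integrate forward from a point $x_1 \in [-2\e, -\e]$ and backward from a point $x_2 \in [\e, 2\e]$, chosen by pigeonhole so that $F(x_j)^2 \leq C\e^{-1}\|F\|_{L^2}^2 \leq C_\e\|u\|_{H_h^1}^2$, and then to use Cauchy--Schwarz in $x$ to pass from $L^1_x$ to $L^2_x$ on the forcing integrals. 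This yields $\sup_{|x|<\e} F(x) \leq C(\|u\|_{H_h^1} + h^{-1}(\|\Pfac_+u\|_{L^2}+\|\Pfac_-u\|_{L^2}))$; inverting the relations between $u$, $hD_x u$, and $u_\pm$ then gives the claim.

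The main technical obstacle is the merely $L^1_x$ regularity of $E_\pm$, which prevents any uniform-in-$x$ bound on $\|E_\pm u(x)\|_{L^2_y}$ and couples the forward-propagated $u_+$ and backward-propagated $u_-$ through the weight $g(x)$. Closing this coupling requires summing the two estimates and applying Gr\"onwall with an $L^1$ coefficient to $F$; a single energy estimate for $u_+$ alone would not close.
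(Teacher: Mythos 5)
Your proposal is correct and takes essentially the same route as the paper: introduce $v_\pm = (hD_x\pm\Lambda_0)u$, derive first-order energy inequalities for $\|v_\pm(x)\|_{L^2_y}$ using reality of $\sigma(\Lambda_0)$, recover $\|u(x)\|_{H^1_{h,y}}$ from $v_\pm$ by microlocal ellipticity of $\Lambda_0$, and close the coupled system with Gr\"onwall using the $L^1_x$ bound on $E_\pm$. The only cosmetic difference is how the starting value is handled: you pick a good initial point by pigeonhole and integrate in both directions with a Cauchy--Schwarz step on the forcing, whereas the paper inserts a cutoff $\chi\in C_c^\infty((-2\e,2\e))$ equal to $1$ on $(-\e,\e)$ and integrates from $x$ to $+\infty$, with the commutator $[hD_x,\chi]$ absorbing the boundary contribution — both devices are standard and yield the same estimate.
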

\begin{proof}
Suppose that $\Pfac_\pm u=f_\pm$ and put \begin{equation}\label{vpm} v_{\pm}=(hD_x\pm\Lambda_0)u\end{equation} so that 
$$
(hD_x\mp\Lambda_0)v_{\pm}= f_{\pm}-h E_{\pm} u.
$$
For $\chi \in C_c^\infty((-2\e,2\e))$, equal to $1$ on $(-\ep, \ep)$,
$$
(hD_x\mp\Lambda_0)\chi(x)v_\pm=\chi f_{\pm}+[hD_x,\chi] v_{\pm}-h\chi E_{\pm}u.
$$

Therefore, 
\begin{align*}
\|\chi(x)v_{\pm}(x)\|^2_{L_y^2}&= -ih^{-1}\int_x^\infty hD_s\|\chi(s)v_{\pm}(s)\|^2_{L_y^2}ds\\
&=h^{-1}\int_x^\infty 2\Im \langle \pm \Lambda_0 \chi(s)v_{\pm}(s),\chi(s)v_{\pm}(s)\rangle_{L^2_y}+2\Im\langle \chi f_{\pm}(s),\chi(s)v_{\pm}(s)\rangle_{L^2_y}\\
&\qquad +2\Im \langle i^{-1}h\chi'(s)v_{\pm}(s),\chi(s)v_{\pm}(s)\rangle-2\Im\langle h\chi(s)E_{\pm}u(s),\chi(s)v_{\pm}(s)\rangle_{L^2_y}ds\\
&\leq C\|v_{\pm}\|^2_{L^2}+Ch^{-2}\|f_{\pm}\|^2_{L^2} + C\int_x^\infty g(s)(\|\chi(s)v_{\pm}(s)\|_{L_y^2}^2+\|\chi(s)u(s)\|_{H_{h,y}^1}^2)ds,
\end{align*}
where $g(s)\in L^1_{\loc}$, and where we have used the fact that $\Lambda_0$ has real principal symbol in our estimate of the corresponding term above. Considering $v_{+}-v_-$ and using ellipticity of $\Lambda_0$ on $\WFh(u(x))$, we have
\begin{align*}
\|\chi(x)u(x)\|^2_{H_{h,y}^1}&\leq C(\|\chi(x)v_{+}(x)\|^2_{L_y^2}+\|\chi(x)v_{-}(x)\|^2_{L_y^2} +O(h^\infty)\|u(x)\|_{L^2_y})\\
&\leq C(\|v_{+}\|^2_{L^2}+\|v_-\|_{L^2}^2+O(h^\infty)\|u\|_{H_h^1}+Ch^{-2}(\|f_+\|^2_{L^2} +\|f_-\|_{L^2}^2\\
&\qquad+ C\int_x^\infty g(s)(\|\chi(s)v_{+}(s)\|_{L_y^2}^2+\|\chi(s)v_{-}(s)\|_{L_y^2}^2+\|\chi(s)u(s)\|_{H_{h,y}^1}^2)ds.
\end{align*}
So, since $\|v_{\pm}\|_{L^2}\leq C\|u\|_{H_h^1}$, all together, we have
\begin{align*}
&\|\chi(x)u(x)\|^2_{H_{h,y}^1}+\|\chi(x)v_+(x)\|^2_{L_y^2}+\|\chi(x)v_-(x)\|^2_{L_y^2}\\
&\leq C(\|u\|_{H_h^1}^2+Ch^{-2}(\|f_+\|^2_{L^2}+\|f_-\|_{L^2}^2) \\
&\qquad+ C\int_x^\infty g(s)(\|\chi(s)v_{+}(s)\|_{L_y^2}^2+\|\chi(s)v_{-}(s)\|_{L_y^2}^2+\|\chi(s)u(s)\|_{H_{h,y}^1}^2)ds.
\end{align*}
Hence, by Gr\"onwall's inequality, for all $x$,
\begin{equation*}
\|\chi(x)u(x)\|^2_{H_{h,y}^1}+\|\chi(x)v_+(x)\|^2_{L_y^2}+\|\chi(x)v_-(x)\|^2_{L_y^2}
\leq C(\|u\|_{H_h^1}^2+h^{-2}(\|f_+\|^2_{L^2}+\|f_-\|^2_{L^2}))e^{C\|g\|_{L^1}},
\end{equation*}
and we obtain the desired pointwise estimate on $u$ for $x \in (-\ep,\ep)$, where $\chi=1$.

The estimate on $hD_xu$ now follows since 
$$
\|hD_x u(x)\|_{L^2_y}\leq C(\|u(x)\|_{H_{h,y}^1}+\|v_+(x)\|_{L^2_y}).\qed
$$\noqed
\end{proof}

\section{Estimates for the Schr\"odinger equation}


We now consider defect measures for solutions to Schr\"odinger equations with low regularity, conormal potentials. In particular, we assume that $V\in W^{1,1}((-2\ep,2\ep); \CI(Y))$ and use Fermi normal coordinates relative to $\{x=0\}$ so that in the notation of \eqref{r},
$$
P= (hD_x)^2-r(x,y,hD_{y})+h\big(a(x,y)hD_x +r'(x,y,hD_y)\big),
$$
with $r\in IW^{1,1}(\{x=0\};S^2(T^*\mathbb{R}^{d-1}))$, $r'\in \mc{C}^\infty(\mathbb{R}_x;S^1(T^*\mathbb{R}^{d-1}))$, and $a \in \CI$.
Now conjugate by $e^{\frac{i}{2}\int_0^xa(s,y)ds}$ to obtain 
$$
\Pfac:=e^{\frac{i}{2}\int_0^xa(s,y)ds}Pe^{-\frac{i}{2}\int_0^xa(s,y)ds}=(hD_x)^2-r(x,y,hD_{y}) +h\tilde{a}(x,y,hD_y),
$$
with $\tilde{a}\in C^\infty(\mathbb{R}_x;S^1(T^*\mathbb{R}^{d-1}))$.

\begin{lemma}
\label{l:factorization}
Let $V \in W^{1,1}((-2\ep,2\ep); \CI(Y))$.  Suppose that $\chi \in C_c^\infty((-2\e,2\e); C_c^\infty(T^*\mathbb{R}^{d-1}))$ with $\supp(\chi)\subset \{r>0\}$. Then there is $\Lambda \in \psit{W^{1,1}}^1$ satisfying
\begin{align*}
\Pfac\opt(\chi)&= (hD_{x}-\Lambda)(hD_x+\Lambda)\opt(\chi)+hE_+\opt(\chi)+O(h^\infty)_{\psit{W^{1,1}}^{-\infty}}\\
&= (hD_{x}+\Lambda)(hD_x-\Lambda)\opt(\chi)+hE_-\opt(\chi)+O(h^\infty)_{\psit{W^{1,1}}^{-\infty}},
\end{align*} 
with $E_\pm\in \psit{L^1}$ and the symbol $\Lambda$ satisfying
$$
\sigma(\Lambda)\chi= \sqrt{r(x,y,\eta)}\chi(x,y,\eta).
$$
\end{lemma}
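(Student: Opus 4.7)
\emph{Plan.} The natural candidate for $\Lambda$ is the quantization of the obvious leading symbol $\sqrt{r}$, localized near $\supp\chi$, with all remaining lower-order corrections absorbed into $E_\pm$. I would fix an auxiliary cutoff $\tilde\chi \in C_c^\infty((-2\ep,2\ep); C_c^\infty(T^*\mathbb{R}^{d-1}))$ with $\tilde\chi \equiv 1$ on an open neighborhood of $\supp\chi$ and $\supp\tilde\chi \subset \{r>0\}$. Since $r > c > 0$ on $\supp\tilde\chi$ and $r \in W^{1,1}(\mathbb{R}_x; C^\infty)$ with $S^2$ behavior in $\eta$, the symbol $\lambda := \sqrt{r}\,\tilde\chi$ lies in $W^{1,1}(\mathbb{R}_x; S^1)$ and is compactly supported in $(y,\eta)$. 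Setting $\Lambda := \opt(\lambda)$ gives the required class $\psit{W^{1,1}}^1$ and the principal-symbol identity $\sigma(\Lambda)\chi = \sqrt{r}\,\chi$, and the chain rule applied to $\partial_x\lambda = (\partial_x r)\tilde\chi/(2\sqrt{r}) + \sqrt{r}\,\partial_x\tilde\chi$ yields the crucial additional fact $\partial_x\Lambda \in \psit{L^1}^1$.

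With $\Lambda$ in hand, the factorizations follow by direct algebraic expansion. The identity $[hD_x,\Lambda] = \tfrac{h}{i}\partial_x\Lambda$, interpreted tangentially, gives
\[
(hD_x \mp \Lambda)(hD_x \pm \Lambda) = (hD_x)^2 - \Lambda^2 \mp ih\,\partial_x\Lambda,
\]
so subtracting from $\Pfac = (hD_x)^2 - r(x,y,hD_y) + h\tilde a(x,y,hD_y)$ and composing with $\opt(\chi)$ on the right produces the three-term error
\[
\bigl(\Lambda^2 - r(x,y,hD_y)\bigr)\opt(\chi) \pm ih\,\partial_x\Lambda\,\opt(\chi) + h\,\tilde a(x,y,hD_y)\opt(\chi).
\]
The last two terms are already of the desired form $h\,E\,\opt(\chi)$ with $E \in \psit{L^1}^1$, using $\partial_x\Lambda \in \psit{L^1}^1$ and smoothness of $\tilde a$.

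The principal work is to rewrite the first term. Applying the tangential Moyal expansion in $(y,\eta)$ with $x$ a parameter, $\lambda\#\lambda = \lambda^2 + h\rho + O(h^\infty)$ with $\rho \in W^{1,1}(\mathbb{R}_x; S^1)$, so
\[
\Lambda^2 - r(x,y,hD_y) = \opt\bigl(r(\tilde\chi^2 - 1)\bigr) + h\,\opt(\rho) + O(h^\infty)_{\psit{W^{1,1}}^{-\infty}}.
\]
Because $\tilde\chi \equiv 1$ on an open neighborhood of $\supp\chi$, the symbols $r(\tilde\chi^2 - 1)$ and $\chi$ have positively separated supports in $(y,\eta)$ uniformly in $x$, so $\opt(r(\tilde\chi^2-1))\,\opt(\chi) = O(h^\infty)_{\psit{W^{1,1}}^{-\infty}}$ by the standard disjoint-support composition bound. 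Hence $(\Lambda^2 - r(x,y,hD_y))\opt(\chi) = h\,\opt(\rho)\,\opt(\chi) + O(h^\infty)_{\psit{W^{1,1}}^{-\infty}}$, and since everything is compactly supported in $x$ we have $\opt(\rho) \in \psit{W^{1,1}}^1 \subset \psit{L^1}^1$. Collecting the three contributions yields $E_+ \in \psit{L^1}^1$ with the first factorization; the $-$ case is identical up to the sign of $ih\,\partial_x\Lambda$.

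The main obstacle I anticipate is pure bookkeeping: verifying that the tangential Moyal calculus, the composition formula, and the $O(h^\infty)_{\psit{W^{1,1}}^{-\infty}}$ disjoint-support bound all retain their standard form when the $x$-dependence of coefficients is merely $W^{1,1}$ or $L^1$ rather than smooth. Since $x$ enters only as a parameter in the $(y,\eta)$-quantization, the usual semiclassical calculus applies fiberwise, and on the bounded $x$-support involved $W^{1,1}$ embeds in $L^\infty$, so products and compositions of such symbols remain in $W^{1,1}$ (hence $L^1$). The one essential point is the derivative drop $\Lambda \in \psit{W^{1,1}}^1 \Rightarrow \partial_x\Lambda \in \psit{L^1}^1$: this is precisely why $E_\pm$ ends up in $\psit{L^1}^1$ rather than $\psit{W^{1,1}}^1$, and is also what makes this low-regularity factorization non-trivial.
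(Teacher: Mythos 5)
Your proof is correct and follows essentially the same route as the paper's: the same choice $\Lambda=\opt(\sqrt{r}\,\tilde\chi)$ for an auxiliary cutoff $\tilde\chi$, the same algebraic expansion of the factored product, the same fiberwise Moyal expansion in $(y,\eta)$, and the same key derivative-drop observation $[hD_x,\Lambda]\in h\psit{L^1}^1$ that puts $E_\pm$ in $\psit{L^1}^1$ rather than $\psit{W^{1,1}}^1$. The only differences are cosmetic (you expand $\Lambda^2-\opt(r)$ before composing with $\opt(\chi)$ rather than after, and you are slightly more explicit about the $h\tilde a$ term and about requiring $\tilde\chi\equiv 1$ on a neighborhood of $\supp\chi$ for the disjoint-support $O(h^\infty)$ bound).
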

\begin{proof}
Let $\tilde{\chi}\in C_c^\infty((-2\e,2\e); C_c^\infty(T^*\mathbb{R}^{d-1}))$ with $\tilde{\chi}\equiv 1$ on $\supp \chi$ and $\supp \tilde{\chi}\subset \{r>0\}$. Put $\Lambda=\opt(\sqrt{r}\tilde{\chi})\in \psit{W^{1,1}}^{\comp}$. 
Then,
$$
(hD_x-\Lambda)(hD_{x}+\Lambda)=((hD_x)^2-\Lambda^2+[hD_x,\Lambda])=(hD_x)^2-\opt(r\tilde{\chi}^2)+O(h)_{\psit{W^{1,1}}^{\comp}} +[hD_x,\Lambda].
$$
Now, since $\Lambda \in \psit{W^{1,1}}^1$, 
$$[hD_x,\Lambda]\in h\psit{L^1}^1.$$

Next, observe that, since $\tilde{\chi}\equiv 1$ on $\supp \chi$, 
$$
\Pfac\opt(\chi)=[(hD_x)^2-\opt(r\tilde{\chi}^2)]\opt(\chi)  +O(h^\infty)_{\psit{W^{1,1}}^{-\infty}}
$$
In particular, 
$$
\Pfac\opt(\chi)=(hD_x-\Lambda)(hD_{x}+\Lambda)\opt(\chi) +hE_+\opt(\chi)+O(h^\infty)_{\psit{W^{1,1}}^{-\infty}},
$$
with $E_+\in \psit{L^1}^1$.

An identical argument shows that 
$$
\Pfac\opt(\chi)=(hD_x+\Lambda)(hD_{x}-\Lambda)\opt(\chi) +hE_-\opt(\chi)+O(h^\infty)_{\psit{W^{1,1}}^{-\infty}},
$$
with $E_-$ as claimed.
\end{proof}

\begin{lemma}
\label{l:hyperbolicLinfty}
Suppose that $X\in \psit{C^\infty}^0$ with $\WFh(X)\subset \{r>0\}$. Then
$$
\|hD_x Xu\|_{L^\infty((-\ep,\ep)_x) L^2_y}+\|Xu\|_{L^\infty ((-\ep,\ep)_x) H^1_{h,y}}\leq C(\|u\|_{L^2}+h^{-1}\|P u\|_{L^2}).
$$
\end{lemma}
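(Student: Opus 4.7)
The plan is to reduce to the tangential energy estimate of Lemma~\ref{l:LinfL2} by microlocally cutting off to the hyperbolic region and invoking the factorization in Lemma~\ref{l:factorization}. First I would conjugate away the subprincipal term: setting $\tilde u = e^{\frac{i}{2}\int_0^x a(s,y)ds}u$ and $\tilde X = e^{\frac{i}{2}\int_0^x a}\, X\, e^{-\frac{i}{2}\int_0^x a}$ gives a unitary reduction with $\Pfac\tilde u = e^{\frac{i}{2}\int a}Pu$, $\|\tilde u\|_{L^2} = \|u\|_{L^2}$, and $\tilde X \in \psit{C^\infty}^0$ still microlocalized in $\{r>0\}$. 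Then I would pick $\chi \in C_c^\infty((-2\ep,2\ep); C_c^\infty(T^*\mathbb{R}^{d-1}))$ with $\supp\chi \subset \{r>0\}$ and $\chi \equiv 1$ on a neighborhood of $\WFh(\tilde X)$, so that $\opt(\chi)\tilde X = \tilde X + O(h^\infty)$ as $L^2$-operators and it suffices to prove the estimate for $w := \opt(\chi)\tilde X\tilde u$.

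Next, Lemma~\ref{l:factorization} produces $\Lambda \in \psit{W^{1,1}}^1$ with $\sigma(\Lambda)\chi = \sqrt r\,\chi$ and $E_\pm \in \psit{L^1}^1$ such that
$$
\Pfac\opt(\chi) = \Pfac_\pm\opt(\chi) + O(h^\infty), \qquad \Pfac_\pm = (hD_x \mp \Lambda)(hD_x \pm \Lambda) + hE_\pm,
$$
with $\Pfac_\pm$ precisely the factored operators of Lemma~\ref{l:LinfL2}. Because $\WFh(w(x)) \subset \supp\chi(x,\cdot,\cdot) \subset \{r>0\}$ and $\sigma(\Lambda) = \sqrt r$ there, $\Lambda_0 := \Lambda$ is elliptic on $\WFh(w(x))$ uniformly in $x$, so Lemma~\ref{l:LinfL2} applies and gives
$$
\|hD_x w\|_{L^\infty((-\ep,\ep)_x)L^2_y} + \|w\|_{L^\infty L^2_y} \leq C\bigl(\|w\|_{H^1_h} + h^{-1}(\|\Pfac_+ w\|_{L^2} + \|\Pfac_- w\|_{L^2})\bigr).
$$
The task is then to bound each term on the right by $C(\|u\|_{L^2} + h^{-1}\|Pu\|_{L^2})$.

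For $\|\Pfac_\pm w\|_{L^2}$ I would use the factorization identity together with a commutator: writing $A = \opt(\chi)\tilde X$,
$$
\Pfac_\pm w = \Pfac A\tilde u + O(h^\infty)\|\tilde u\|_{L^2} = A\,\Pfac\tilde u + [\Pfac, A]\tilde u + O(h^\infty)\|\tilde u\|_{L^2}.
$$
The first term is bounded by $C\|Pu\|_{L^2}$. Splitting $[\Pfac, A] = [(hD_x)^2, A] - [r, A] + h[\tilde a, A]$, the Sobolev embedding $W^{1,1}(\mathbb{R}) \hookrightarrow L^\infty(\mathbb{R})$ ensures that $r$ has $L^\infty_x$ coefficients with smooth $(y,\eta)$-symbol, so pseudodifferential calculus in $y$ (using the compact fiber wavefront of $A$) yields $\|[r, A]\tilde u\|_{L^2} = O(h)\|\tilde u\|_{L^2}$ and $\|h[\tilde a, A]\tilde u\|_{L^2} = O(h^2)\|\tilde u\|_{L^2}$.

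The main obstacle is the $[(hD_x)^2, A]$-commutator, which after reorganization equals $(2h/i)(hD_x)(\partial_x A) + O_{L^2 \to L^2}(h^2)$ and therefore involves $(hD_x)$ applied to the microlocalized function $(\partial_x A)\tilde u$; the same issue appears in controlling $\|w\|_{H^1_h}$. To overcome the $L^2$-only regularity of $\tilde u$, I would apply Lemma~\ref{l:factorization} once more to a slightly enlarged cutoff $\chi' \equiv 1$ on $\supp\chi$ and use the identity $(hD_x)^2 = (hD_x - \Lambda)(hD_x + \Lambda) + \Lambda^2 - [hD_x, \Lambda]$: the factored product is equivalent to $\Pfac - hE_+$ on the cutoff modulo $O(h^\infty)$, while $\Lambda^2 \in \psit{L^\infty}^{\comp}$ and $h^{-1}[hD_x,\Lambda] \in \psit{L^1}^{\comp}$ are bounded on $L^2_{xy}$ via $W^{1,1}\hookrightarrow L^\infty$. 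This trades each surviving $(hD_x)$-factor either for a bounded tangential operator or for an application of $\Pfac$ to $\tilde u$, yielding $\|w\|_{H^1_h} \leq C(\|\tilde u\|_{L^2} + h^{-1}\|\Pfac\tilde u\|_{L^2})$ and a matching bound on $\|[(hD_x)^2, A]\tilde u\|_{L^2}$. Inserting these into the inequality above produces the claimed bound for $\|hD_x Xu\|_{L^\infty L^2_y} + \|Xu\|_{L^\infty L^2_y}$; the $hD_y$-component of $\|Xu\|_{L^\infty H^1_{h,y}}$ then follows by rerunning the argument with $hD_y\tilde X$ in place of $\tilde X$, which is again in $\psit{C^\infty}^0$ with the same wavefront thanks to the compact $\eta$-support of $\chi$. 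The hardest step is this elliptic trading in the $(hD_x)^2$-commutator, which requires both the compact fiber wavefront of $A$ and the $L^\infty_x$-boundedness of $r$ and $\Lambda$ that come from the conormality hypothesis.
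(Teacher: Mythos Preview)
Your overall architecture---conjugate away the subprincipal term, microlocalize with a tangential cutoff, invoke Lemma~\ref{l:factorization}, and feed the result into Lemma~\ref{l:LinfL2}---matches the paper. The gap is in how you propose to control the two right-hand terms $\|w\|_{H^1_h}$ and $h^{-1}\|\Pfac_\pm w\|_{L^2}$.

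Your plan is to use the factorization a second time, via the identity $(hD_x)^2=(hD_x-\Lambda)(hD_x+\Lambda)+\Lambda^2-[hD_x,\Lambda]$, to trade each $hD_x$ for bounded tangential operators plus $\Pfac$. But the assertion that $h^{-1}[hD_x,\Lambda]=-i\partial_x\Lambda\in\psit{L^1}^{\comp}$ is bounded on $L^2_{x,y}$ is false: the Sobolev embedding $W^{1,1}\hookrightarrow L^\infty$ controls $\Lambda$, not $\partial_x\Lambda$, and an $x$-family of tangential operators with norm merely in $L^1_x$ is not $L^2_{x,y}$-bounded. The same issue arises with $E_\pm\in\psit{L^1}$. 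If instead you try to estimate the pairings $\langle(\partial_x\Lambda)w,w\rangle$ or $\langle E_\pm w,w\rangle$ by $\|g\|_{L^1}\|w\|_{L^\infty_xL^2_y}^2$, you land on the very quantity the lemma is meant to bound, so the argument becomes circular. Likewise, bounding $\|[(hD_x)^2,A]\tilde u\|$ requires $\|hD_x\tilde u\|_{L^2}$, i.e.\ $H^1_h$-control on $\tilde u$, which you do not yet have.

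The paper sidesteps all of this with a single observation you omit: since $\sigma(\Pfac)=\xi^2-r$ is elliptic at fiber infinity, Lemma~\ref{l:elliptic} gives the a priori estimate
\[
\|v\|_{H_h^2}\leq C\big(\|v\|_{L^2}+\|\Pfac v\|_{L^2}\big)
\]
for $v=e^{\frac{i}{2}\int a}u$. This immediately bounds $\|\tilde X v\|_{H_h^1}\leq C\|v\|_{H_h^1}$ and, since $[\Pfac,\tilde X]$ maps $H_h^1\to L^2$ with norm $O(h)$ (the dangerous piece $[(hD_x)^2,\tilde X]$ contributes $h\cdot hD_x\cdot(\text{bounded})$), also gives $h^{-1}\|\Pfac\tilde X v\|\leq C(\|v\|_{H_h^1}+h^{-1}\|\Pfac v\|)$. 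No second pass through the factorization is needed. Add this elliptic step and your argument goes through.
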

\begin{proof}
First, observe that $\sigma(\Pfac)=\xi^2-r(x,y,\eta)$ is elliptic for $|(\xi,\eta)|$ large enough. Therefore, by Lemma~\ref{l:elliptic} there is $\chi \in C_c^\infty(T^*\mathbb{R}^d)$ such that 
$$
\|\op(1-\chi)v\|_{H_h^2}\leq C(\|v\|_{L^2}+\|\Pfac u\|_{L^2}).
$$
In particular, 
\begin{equation}
\label{e:basicEllipticP}
\|v\|_{H_h^2}\leq C(\|v\|_{L^2}+\|\Pfac v\|_{L^2}+\|\op(\chi)v\|_{H_h^2})\leq C(\|v\|_{L^2}+\|\Pfac v\|_{L^2}).
\end{equation}

Now, let $\tilde{X}= e^{\frac{i}{2}\int_0^xa(s,y)ds}Xe^{-\frac{i}{2}\int_0^xa(s,y)ds}\in \psit{C^\infty}^0$
and observe that $\WFh(\tilde{X})=\WFh(X)\subset \{r>0\}\subset \operatorname{ell}(\Lambda)$, hence, there is  $\chi \in C_c^\infty((-2\e,2\e); C_c^\infty(T^*\mathbb{R}^{d-1}))$ with $\chi\equiv 1$ on $\WFh(\tilde{X})$ and $\supp \chi \subset \{r>0\}$. In particular,
$$
\tilde{X} v=\opt(\chi) \tilde{X} v + O(h^\infty)_{\psit{C^\infty}^{-\infty}} v.
$$
Consequently, Lemma~\ref{l:factorization} now implies that $\tilde{X} v$ satisfies equations
\begin{align*}\Pfac \tilde{X} v&=(hD_{x}-\Lambda)(hD_x+\Lambda)\tilde{X} v+hE_+\tilde{X} v-R_+ v\\
&= (hD_{x}+\Lambda)(hD_x-\Lambda)\tilde{X} v+hE_-\tilde{X} v-R_- v.
\end{align*}
with
$$
R_\pm = O(h^\infty)_{\psit{W^{1,1}}^{-\infty}}.
$$
Now let $$\begin{aligned}
\Pfac_+ &=(hD_{x}-\Lambda)(hD_x+\Lambda)+hE_+,\\
\Pfac_- &=(hD_{x}+\Lambda)(hD_x-\Lambda)+hE_-.
\end{aligned}$$
Thus
$$
\Pfac_\pm (\tilde{X} v) = \Pfac (\tilde X v) + R_\pm v,
$$
hence Lemma~\ref{l:LinfL2} implies that for all $N \in \NN$,
\begin{align*}
\|\tilde{X}v\|_{L^\infty_xH^1_{h,y}}&\leq C(\|\tilde{X}v\|_{H_h^1}+h^{-1}\|\Pfac \tilde{X}v\|_{L^2}+ h^N \| v\|_{L^2})\\
&\leq C(\|v\|_{H_h^1}+h^{-1}\|\tilde{X}\Pfac v\|_{L^2})\\
&\leq C(\|v\|_{L^2}+h^{-1}\|\Pfac v\|_{L^2}).
\end{align*}

Put $v= e^{\frac{i}{2}\int_0^xa(s,y)ds}u$ so that $\Pfac v= e^{\frac{i}{2}\int_0^xa(s,y)ds}Pu$ and note that, since $a\in L^1((-\e,\e);C^\infty(\mathbb{R}^{d-1}))$,
\begin{align*}
\|Xu\|_{L^\infty_xH_{h,y}^1}\leq C\|\tilde{X}v\|_{L^\infty_xH_{h,y}^1}
&\leq C(\|v\|_{L^2}+Ch^{-1}\|\Pfac v\|_{L^2})\\
&\leq C(\|u\|_{L^2}+Ch^{-1}\|Pu\|_{L^2}).
\end{align*}
In addition, since $a\in L^\infty((-2\e,2\e);C^\infty(\mathbb{R}^{d-1}))$,
\begin{align*}
\|hD_xXu\|_{L^\infty_xH_{h,y}^1}&\leq C(\|hD_x\tilde{X}v\|_{L^\infty_xH_{h,y}^1}+h\|\tilde{X}v\|_{L^\infty_xH_{h,y}^1})\\
&\leq C(\|v\|_{L^2}+Ch^{-1}\|\Pfac v\|_{L^2})\\
&\leq C(\|u\|_{L^2}+Ch^{-1}\|Pu\|_{L^2}).
\end{align*}

\end{proof}

\section{Defect measures for the Schr\"odinger equation}

In this section we continue to assume $V \in W^{1,1}((-2\ep,2\ep); \CI(Y)).$  Our main result here is Lemma~\ref{lemma:hypprop}, which, together with the elliptic estimate in Lemma~\ref{l:ellipticU}, establishes Theorem~\ref{t:hyp}.

We now suppose that $Pu=o(h)_{L^2}$, $\|u\|_{L^2}\leq C$ and study defect measures for $u$.  Recall again that a pure sequence is one along which $\ang{\op a u_h,u_h}$ converges to $\mu(a)$ for a unique defect measure.





\begin{lemma}
\label{l:L1}
Suppose that $\|u\|_{L^2}\leq C$, $Pu=O(h)_{L^2}$, and $u_h$ is a pure sequence with defect measure $\mu$. Then, for $a\in L^1((-\e,\e);C_c^\infty(T^*\mathbb{R}^{d-1}))$, with $\supp a\subset \{r>0\}$, 
\begin{equation}
\label{e:L1}
\limsup_{h\to 0}|\langle\opt(a)u,u\rangle |+|\langle \opt(a)hD_xu,u\rangle| \leq C\|a\|_{L^1_xL^\infty_{y\eta}}.
\end{equation}
In addition, for $a\in L^1((-\e,\e);C_c^\infty(T^*\mathbb{R}^{d-1}))$ with $\supp a\subset \{r>0\}$,
$$
\lim_{h\to 0}\langle \opt(a)u,u\rangle \to \mu(a),\qquad \lim_{h\to 0}\langle \opt(a)hD_{x}u,u\rangle \to \mu(a\xi).
$$
In particular, if $a\in L^1_xL^\infty_{y\eta}$ and $\supp a\subset \{r>0\}$ then $a,\, a\xi\in L^1(\mu)$.
\end{lemma}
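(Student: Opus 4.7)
The plan is to prove three things in sequence: the quantitative bound \eqref{e:L1}, convergence of matrix elements to $\mu(a)$ and $\mu(a\xi)$ for smooth compactly supported $a$, and finally the extension to $L^1_xL^\infty_{y\eta}$ by density and monotone convergence, which gives the $L^1(\mu)$ statement.

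To establish \eqref{e:L1}, I would first pick a smooth tangential cutoff $X=\opt(\chi)$, with $\chi\in C_c^\infty((-2\e,2\e); C_c^\infty(T^*\mathbb{R}^{d-1}))$ identically $1$ on $\supp a$ and $\supp\chi\subset\{r>0\}$. Applying Lemma~\ref{l:hyperbolicLinfty} (together with $\|u\|_{L^2}\leq C$ and $Pu=O(h)_{L^2}$) yields
\[
\|Xu\|_{L^\infty_xL^2_y}+\|hD_xXu\|_{L^\infty_xL^2_y}\leq C.
\]
Since $\chi\equiv 1$ on $\supp a$, the tangential symbol calculus gives $\opt(a)=X^*\opt(a)X+O(h^\infty)$, so $\langle\opt(a)u,u\rangle=\langle\opt(a)Xu,Xu\rangle+O(h^\infty)$. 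As $\opt(a)$ acts only in $y$, rewriting this pairing fiber-by-fiber in $x$ and applying Cauchy--Schwarz in $y$ gives
\[
|\langle\opt(a)Xu,Xu\rangle|\leq\int\|\op_y(a(x,\cdot,\cdot))\|_{L^2_y\to L^2_y}\|Xu(x)\|^2_{L^2_y}\,dx.
\]
Using the pointwise bound $\|Xu(x)\|_{L^2_y}\leq C$ together with the symbol-to-operator-norm bound for tangential symbols of fixed compact $(y,\eta)$-support (so that the Calder\'on--Vaillancourt constant collapses to a constant times $\|a(x,\cdot,\cdot)\|_{L^\infty_{y\eta}}$) yields $\leq C\|a\|_{L^1_xL^\infty_{y\eta}}$. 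The $hD_xu$ term is handled identically after commuting $X$ past $hD_x$ (the commutator lies in $h\psit{C^\infty}^0$) and using the $L^\infty_xL^2_y$ bound on $hD_xXu$.

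For the convergence, when $a\in C_c^\infty((-\e,\e)\times T^*\mathbb{R}^{d-1})$ with $\supp a\subset\{r>0\}$, Lemma~\ref{l:tangentialcomposition} identifies $\opt(a)$ (and $\opt(a)hD_x$) with standard semiclassical pseudodifferential operators in $\Psi_h^{\comp}$ of principal symbol $a$ (respectively $a\xi$). The definition of a pure defect measure then gives $\langle\opt(a)u,u\rangle\to\mu(a)$ and $\langle\opt(a)hD_xu,u\rangle\to\mu(a\xi)$. For the extension, given $a\in L^1((-\e,\e);C_c^\infty_{y\eta})$ with $\supp a\subset\{r>0\}$, I would mollify in $x$ to obtain $a_n\in C_c^\infty$ with $a_n\to a$ in $L^1_xL^\infty_{y\eta}$. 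The bound \eqref{e:L1} applied to $a-a_n$ shows $\mu(a_n)=\lim_h\langle\opt(a_n)u,u\rangle$ is Cauchy and gives $\langle\opt(a)u,u\rangle\to\mu(a):=\lim_n\mu(a_n)$; the analogous statement holds for $a\xi$. For general $a\in L^1_xL^\infty_{y\eta}$ with $\supp a\subset\{r>0\}$, split into nonnegative real and imaginary parts and approximate each from below by nonnegative $a_n\in L^1_x(C_c^\infty_{y\eta})$; monotone convergence and positivity of $\mu$ give $\int a\,d\mu\leq C\|a\|_{L^1_xL^\infty_{y\eta}}<\infty$, hence $a\in L^1(\mu)$. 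Since $\supp\mu\subset\{p=\xi^2-r=0\}$ and $|\xi|^2=r$ is bounded on the compact set $\supp a\cap\{r>0\}$, also $a\xi\in L^1(\mu)$.

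The main obstacle is the bound \eqref{e:L1}. The choice of microlocal cutoff $X$ is precisely what triggers Lemma~\ref{l:hyperbolicLinfty} and produces the $L^\infty_xL^2_y$ control on $Xu$ and $hD_xXu$ that all later estimates rest upon; beyond that, one must carefully handle the $L^1_x$ singularity of $a$ by applying symbol-to-operator bounds pointwise in $x$ and integrating (rather than treating $\opt(a)$ as a single operator on $L^2_{x,y}$). Once \eqref{e:L1} is in place, the remaining steps---defect measure theory applied to smooth symbols, density in $L^1_xL^\infty_{y\eta}$, and monotone convergence for the $L^1(\mu)$ claim---are routine.
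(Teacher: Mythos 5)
Your proposal follows essentially the same route as the paper: introduce the tangential cutoff $\opt(\chi)$ elliptic on $\supp a$, invoke Lemma~\ref{l:hyperbolicLinfty} for the $L^\infty_x L^2_y$ control on $\opt(\chi)u$ and $hD_x\opt(\chi)u$, estimate the pairing fiber-by-fiber in $x$ and integrate, identify the limit for smooth $a$ via Lemma~\ref{l:tangentialcomposition}, and then pass to $L^1_x$ symbols by mollification and density. One small imprecision to flag: the Calder\'on--Vaillancourt bound does not literally ``collapse'' to $C\|a(x,\cdot,\cdot)\|_{L^\infty_{y\eta}}$ at fixed $h$ (it involves finitely many derivatives of the symbol); what is used is that $\limsup_{h\to 0}\|\opt(a)(x)\|_{L^2_y\to L^2_y}\le \sup_{y,\eta}|a(x,y,\eta)|$ together with dominated convergence in $x$ (the derivative seminorms of $a(x,\cdot,\cdot)$ being integrable in $x$ provide the dominating function) -- the paper elides this too, but it is worth being precise. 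Also, at the end you should establish $a\in L^1(\mu)$ \emph{before} identifying $\lim_n\mu(a_n)$ with $\int a\,d\mu$ (rather than defining it that way), since $\mu(a)$ already has a meaning; the paper's ordering (density gives $L^1(\mu)$ membership first, then mollification gives convergence) avoids this circularity.
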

\begin{proof}
We start by proving~\eqref{e:L1}. Let $a \in L^1((-\e,\e);C_c^\infty(T^*\mathbb{R}^{d-1})$ with $\supp a \subset \{r>0\}$.  Fix $\chi \in \CI((-\e,\e); C_c^\infty(T^*\RR^{d-1})$ equal to $1$ on $\supp a$ and supported in $\{r>0\}$; thus $\opt(\chi)^* \opt(a)\opt(\chi)=\opt(a)+O(h^\infty)_{L^1_x\Psit^{-\infty}}$. Using Lemma~\ref{l:hyperbolicLinfty} to estimate $\opt(\chi) u$, and Lemma~\ref{l:ellipticU} to estimate $\|u\|_{H_h^2}$, we have

\allowdisplaybreaks
\begin{align*}
&\limsup_{h\to 0}|\langle \opt(a)u,u\rangle|\\
& 
\leq \limsup_{h\to 0}\Big|\int \langle (\opt(\chi)^*\opt(a)\opt(\chi)u)(x), u(x)\rangle_{L^2_y}dx\Big|+\limsup_{h\to 0}O(h^\infty)\|u\|_{L^\infty_xL^2_y}^2\\
& 
=\limsup_{h\to 0}\Big|\int \langle (\opt(a)\opt(\chi)u)(x),(\opt(\chi) u)(x)\rangle_{L^2_y}dx\Big|+\limsup_{h\to 0}O(h^\infty)\|u\|_{H_h^1}^2\\
& \leq \limsup_{h\to 0}\int \|\opt(a)(x)\|_{L_y^2\to L_y^2}\|(\opt(\chi)u)(x)\|_{L_y^2}^2dx\\
&\leq \limsup_{h\to 0}\|\opt(\chi)u\|_{L^\infty_xL^2_y}^2\int \|\opt(a)(x)\|_{L_y^2\to L_y^2}dx\\
&\leq \limsup_{h\to 0}(\|u\|_{L^2}^2+h^{-2}\|Pu\|_{L^2}^2)\int \|\opt(a)(x)\|_{L_y^2\to L_y^2}dx\\
&\leq C\limsup_{h\to 0}\int \|\opt (a)(x)\|_{L_y^2\to L_y^2}dx\\
&\leq C\int \sup_{y,\eta}|\sigma(a)(x,y,\eta)|dx\\
&\leq C\|\sigma(a)\|_{L^1_xL^\infty_{y\eta}}.
\end{align*}

Here we have crucially used Lemma~\ref{l:hyperbolicLinfty} to estimate $\opt(\chi) u$ in terms of $u$ and $Pu$.


By the same line of argument (again using Lemma~\ref{l:hyperbolicLinfty}, as well as the $O(h)$ bounds on commutators)
\begin{align*}
&\limsup_{h\to 0}|\langle \opt(a) hD_x u,u\rangle|\\
&=\limsup_{h\to 0}\Big|\int \langle (\opt(\chi)^*\opt(a)\opt(\chi)hD_x u)(x), u(x)\rangle_{L^2_y}dx\Big| \\
&\quad\quad +\limsup_{h\to 0}O(h^\infty)\|u\|_{L^\infty_xL^2_y}\|hD_xu\|_{L^\infty_xL^2_y}\\
&
\leq \limsup_{h\to 0}\Big|\int \langle (\opt(a)hD_x \opt(\chi) u)(x), \opt(\chi)u(x)\rangle_{L^2_y}dx\Big|\\
&\quad\quad + 
\Big|\int \langle ( \opt(a)[\opt(\chi),hD_x] u)(x), \opt(\chi)u(x)\rangle_{L^2_y}dx\Big|\\ 
&\quad\quad +\limsup_{h\to 0}O(h^\infty)\|u\|^2_{H_h^2}\\
&
=\limsup_{h\to 0}\Big|\int \langle (\opt(a)^* \opt(\chi)u)(x),(hD_x\opt(\chi)) u)(x)\rangle_{L^2_y}dx\Big|\\
&\quad\quad + \limsup_{h\to 0}Ch\|u\|_{L^\infty_xL^2_y}^2\\
&\leq \limsup_{h\to 0}\int \|\opt(a)^*(x)\|_{L_y^2\to L_y^2}\|(h D_x\opt(\chi))u)(x)\|_{L_y^2}\|(\opt(\chi)u)(x)\|_{L^2_y}dx\\
&\leq \limsup_{h\to 0}\|h D_x\opt(\chi)u\|_{L^\infty_xL_y^2}\|\opt(\chi)u\|_{L^\infty_xL^2_y}\int \|\opt(a)^*(x)\|_{L_y^2\to L_y^2}dx\\
&\leq \limsup_{h\to 0}(\|u\|_{L^2}^2+h^{-2}\|Pu\|_{L^2}^2)\int \|\opt(a)^*(x)\|_{L_y^2\to L_y^2}dx\\
&\leq C\|\sigma(a)\|_{L^1_xL^\infty_{y\eta}}.
\end{align*}
This completes the proof of~\eqref{e:L1}.

We now prove the rest of the Lemma. By Lemma~\ref{l:ellipticU}, there is $\chi \in C_c^\infty(T^*M)$ such that $(1-\op(\chi))u=O(h)_{H_h^2}$ and hence
for $a\in C_c^\infty((-\e,\e);C_c^\infty(T^*\mathbb{R}^{d-1}))$, by Lemma~\ref{l:tangentialcomposition},
\begin{equation}
\label{e:defectConv}
\begin{gathered}
\langle \opt(a)u,u\rangle =\langle \opt(a)\op(\chi)u,u\rangle+O(h)\to \mu(a\chi)=\mu(a),\\
\langle \opt(a)hD_xu,u\rangle =\langle \opt(a)hD_x\op(\chi)u,u\rangle+O(h)\to \mu(a\chi\xi)=\mu(a\xi).
\end{gathered}
\end{equation}
Therefore, by~\eqref{e:L1}, 
$$
|\mu(a)|+|\mu(a\xi)|\leq C\|a\|_{L^1_xL^\infty_{y\eta}}.
$$
In particular, by density, if $a\in L^1_xL^\infty_{y\eta}$ with $\supp a\subset \{r>0\}$, then $a,\, a\xi\in L^1(\mu)$. 

Now, let $\psi\in C_c^\infty((-1,1))$ with $\int \psi(x)dx=1$, and define $\psi_\e(x):=\e^{-1}\psi(\e^{-1}x)$, and let $a\in L^1((-\e,\e);C_c^\infty(T^*\mathbb{R}^{d-1}))$ with $\supp a\subset \{r>0\}$. Define $a_\e:=\psi_\e*a$ so that $a_\e \to a$ in $L^1((-\e,\e);C_c^\infty(T^*\mathbb{R}^{d-1})$ and, for $\e>0$ small enough, $\supp a_\e\subset \{r>0\}$. 

Then, by~\eqref{e:L1}
$$
\lim_{\e \to 0}\limsup_{h\to 0}|\langle (\opt(a_\e)-\opt(a))u,u\rangle|+|\langle (\opt(a_\e)-\opt(a))hD_xu,u\rangle|=0
$$
and by~\eqref{e:defectConv}
$$
\lim_{h\to 0}\langle \opt(a_\e)u,u\rangle =\mu(a_\e),\qquad \lim_{h\to 0}\langle \opt(a_\e)hD_xu,u\rangle =\mu(a_\e\xi).
$$
Therefore,  since 
$$\{a\in L^1_xL^\infty_{y\eta}\,:\, \supp a\subset \{r>0\}\}\subset L^1(\mu),$$
we have
$$
\lim_{h\to 0}\langle \opt(a)u,u\rangle =\lim_{\e \to 0}\mu(a_\e)=\mu(a),\qquad \lim_{h\to 0}\langle \opt(a)hD_xu,u\rangle =\lim_{\e \to 0}\mu(a_\e\xi)=\mu(a\xi).
$$

\end{proof}

We now compute $\mu(H_pa)$ for certain special test operators $a$.
\begin{lemma}
\label{l:specialTest}
Suppose that $\|u\|_{L^2}\leq C$, $Pu=o(h)_{L^2}$, and $u_h$ is a pure sequence with defect measure $\mu$. Then
for $a_j\in C_c^\infty((-\e,\e);C_c^\infty(T^*\mathbb{R}^{d-1}))$ with $\supp a_j\subset \{r>0\}$,
$$
\mu(H_p(a_0+a_1\xi))=0.
$$
\end{lemma}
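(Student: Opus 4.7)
The plan is to compute the matrix element $\frac{i}{h}\langle [P,A] u, u\rangle$ for the tangential test operator $A := \opt(a_0) + \opt(a_1)\,hD_x$, whose principal symbol is $a_0 + a_1\xi$, and identify its limit in two ways: as $o(1)$ from the Schr\"odinger equation, and as $\mu(H_p(a_0+a_1\xi))$ from a direct symbolic expansion.

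On the equation side, self-adjointness of $P$ combined with $Pu = o(h)_{L^2}$ gives
$$\langle [P,A] u, u\rangle = \langle Au, Pu\rangle - \langle Pu, A^* u\rangle = o(h)\bigl(\|Au\|_{L^2}+\|A^*u\|_{L^2}\bigr).$$
To conclude $\frac{i}{h}\langle [P,A] u, u\rangle = o(1)$, I need $\|Au\|_{L^2}, \|A^*u\|_{L^2} = O(1)$. This follows from $\supp a_j \subset \{r>0\}$ and Lemma~\ref{l:hyperbolicLinfty} applied with $X = \opt(a_1)$ (and with $X=\opt(\bar a_1)$ for $A^*$, after commuting $hD_x$ past $\opt(a_1)^*$): it yields $hD_x X u \in L^\infty_x L^2_y$, hence the relevant $L^2$-bound on the compact $x$-support of $a_1$.

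On the symbolic side, I write $P = (hD_x)^2 - \opt(r) + hQ$ with $Q = \opt(a)hD_x + \opt(r')$ smooth, so that $\frac{i}{h}[hQ,A] = O(h)$ is negligible. Modulo $O(h)$ in the relevant tangential class, the remaining commutators expand as $\frac{i}{h}[(hD_x)^2, \opt(a_0)] = 2\opt(\partial_x a_0)\,hD_x$, $\frac{i}{h}[(hD_x)^2, \opt(a_1)hD_x] = 2\opt(\partial_x a_1)(hD_x)^2$, $-\frac{i}{h}[\opt(r),\opt(a_0)] = -\opt(\{r,a_0\}_{y,\eta})$ (the tangential Poisson bracket, well-defined because $r$ is smooth in $y,\eta$), and $-\frac{i}{h}[\opt(r),\opt(a_1)hD_x] = -\opt(\{r,a_1\}_{y,\eta})\,hD_x + \opt(a_1\,\partial_x r)$, the last piece coming from $-\opt(a_1)\,\frac{i}{h}[\opt(r),hD_x] = \opt(a_1\,\partial_x r)$. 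Although $\partial_x r \in L^1_x$, the symbol $a_1\,\partial_x r$ belongs to $L^1_x C^\infty_{y,\eta}$ and is supported in $\{r>0\}$, so Lemma~\ref{l:L1} yields $\langle \opt(a_1\,\partial_x r) u, u\rangle \to \mu(a_1\,\partial_x r)$. The $(hD_x)^2$-term is handled by substituting $(hD_x)^2 = P + \opt(r) + O(h)_{\opt}$ and using $Pu=o(h)$ to get $\langle \opt(\partial_x a_1)(hD_x)^2 u, u\rangle \to \mu(\partial_x a_1 \cdot r)$; since $\supp \mu \subset \{\xi^2 = r\}$ by Lemma~\ref{l:ellipticU}, this equals $\mu(\partial_x a_1 \cdot \xi^2)$. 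Assembling every contribution via Lemma~\ref{l:L1} gives
$$\tfrac{i}{h}\langle [P,A] u, u\rangle \longrightarrow \mu\bigl(2\xi\,\partial_x a_0 + 2\xi^2\,\partial_x a_1 + a_1\,\partial_x r - \{r,a_0\}_{y,\eta} - \xi\{r,a_1\}_{y,\eta}\bigr) = \mu\bigl(H_p(a_0+a_1\xi)\bigr),$$
which must therefore vanish.

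The main technical obstacle is the ``singular'' term $\opt(a_1)\opt(\partial_x r)$: because $\partial_x r \in L^1_x$ (not $L^\infty$), standard pseudodifferential bounds do not apply, and the convergence of its matrix element to $\mu(a_1\,\partial_x r)$ rests on the $L^1_x L^\infty_{y,\eta}$ endpoint estimate of Lemma~\ref{l:L1}. That estimate in turn crucially uses $\supp a_1 \subset \{r>0\}$ together with the $L^\infty_x L^2_y$ control from the factorization-based argument of Lemma~\ref{l:hyperbolicLinfty}. All other contributions are standard symbolic computations, rigorous only after trading $(hD_x)^2$ for $\opt(r)$ using the equation and the support of $\mu$.
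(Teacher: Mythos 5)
Your proof is correct and follows essentially the same route as the paper: the commutator $\frac{i}{h}\langle[P,\opt(a_0)+\opt(a_1)hD_x]u,u\rangle$ is evaluated symbolically, $(hD_x)^2$ is traded for $P+\opt(r)$ modulo $O(h)$, and Lemma~\ref{l:L1} converts the matrix elements, including the singular $a_1\partial_x r$ term, into $\mu$-integrals. The only cosmetic difference is that you invoke the heavier Lemma~\ref{l:hyperbolicLinfty} to bound $\|Au\|_{L^2}$, where the paper just uses $\|u\|_{H_h^1}\leq C$ from Lemma~\ref{l:ellipticU}.
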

\begin{proof}
First, observe that since, by Lemma~\ref{l:ellipticU}, $\|u\|_{H_h^1}\leq C$, 
\begin{align*}
|\langle [P,\opt(a_0)&+\opt(a_1)hD_x]u,u\rangle|\\
&\leq  |\langle (\opt(a_0)+\opt(a_1)hD_{x_1})u,Pu\rangle| +|\langle (\opt(a_0)+\opt(a_1)hD_{x_1})Pu,u\rangle|\\
&\leq C\|u\|_{H_h^1}\|Pu\|_{L^2}=o(h).
\end{align*}

Now, for $a\in C^\infty((-\e,\e);C_c^{\infty}(T^*\mathbb{R}^{d-1}))$
\begin{gather*}
ih^{-1}[P,\opt(a)]=2\opt(\partial_xa)hD_{x}-\opt(\{r,a\})+h\opt(e_1) +O(h^\infty)_{\psit{W^{1,1}}^{-\infty}}\\
ih^{-1}[P,hD_{x}]=\opt(\partial_x r)+h\opt(e_2)hD_{x}+O(h^\infty)_{\psit{W^{1,1}}^{-\infty}}
\end{gather*}
for some $e_i\in W^{1,1}((-\e,\e);C_c^\infty(T^*\mathbb{R}^{d-1}))$ with $\supp e_i\subset \supp a$. 
Consequently,
\begin{align*}
&ih^{-1}\langle [P,\opt(a_0)+\opt(a_1)hD_x]u,u\rangle\\
&= ih^{-1}\langle ([P,\opt(a_0)]+[P,\opt(a_1)]hD_x+\opt(a_1)[P,hD_{x}])u,u\rangle\\
&=\big \langle\big( 2\opt(\partial_x a_0)hD_x-\opt(\{r,a_0\})\big)u,u\big\rangle\\
&\qquad+\big\langle\big((2\opt(\partial_x a_1)hD_x-\opt(\{r,a_1\}))hD_{x}+\opt(a_1)\opt(\partial_xr)\big)u,u\big\rangle\\
&\qquad+h\langle(\opt(b_0)+\opt(b_1)hD_x)u,u\rangle\\
&=\big \langle\big( 2\opt(\partial_x a_0)hD_x-\opt(\{r,a_0\})\big)u,u\big\rangle\\
&\qquad+\big\langle\big((-\opt(\{r,a_1\}))hD_{x}+\opt(a_1)\opt(\partial_xr)\big)u,u\big\rangle+\big\langle2\opt(\partial_x a_1)(P+\opt(r))u,u\big\rangle\\
&\qquad+h\langle(\opt(b_0)+\opt(b_2)hD_x)u,u\rangle
\end{align*}
with $b_i\in L^1((-\e,\e);C_c^\infty(T^*\mathbb{R}^{d-1}))$ and $\supp b_i\subset  \{r>0\}$.

Using Lemma~\ref{l:L1}, together with the fact that $\|u\|_{H_h^1}\leq C$, we obtain
\begin{align*}
0&=\lim_{h\to 0}ih^{-1}\langle [P,\opt(a_0)+\opt(a_1)hD_x]u,u\rangle\\
&=\mu(2\partial_xa_0\xi-\{r,a_0\}-\{r,a_1\}\xi+a_1\partial_xr+2\partial_xa_1(p+r))\\
&=\mu(H_p(a_0+a_1\xi)),
\end{align*}
and the lemma is proved.
\end{proof}

Finally, we extend the previous lemma to any test function $a\in C_c^\infty(T^*M)$.

\begin{lemma}\label{lemma:hypprop}
Suppose that $\|u\|_{L^2}\leq C$, $\|Pu\|_{L^2}=o(h)$, and $u$ has defect measure $\mu$. Then for all $a\in C_c^\infty (T^*M)$ supported close enough to $\{x=0\}$ with $\supp a\subset \{r>0\}$,
$$
\mu(H_pa)=0. 
$$
\end{lemma}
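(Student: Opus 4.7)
The plan is to reduce to symbols of the form $a_0 + a_1\xi$ already covered by Lemma~\ref{l:specialTest}, via polynomial division by the characteristic polynomial $p = \xi^2 - r$. Since $\supp a \subset \{r > 0\}$, the set $\{p = 0\}\cap\supp a$ decomposes into two sheets $\{\xi = \pm\sqrt{r}\}$. Setting
$$
b_0 = \tfrac{1}{2}\bigl(a|_{\xi=\sqrt r} + a|_{\xi=-\sqrt r}\bigr), \qquad b_1 = \tfrac{1}{2\sqrt r}\bigl(a|_{\xi=\sqrt r} - a|_{\xi=-\sqrt r}\bigr),
$$
the polynomial $b_0(x,y,\eta) + b_1(x,y,\eta)\xi$ interpolates $a$ at $\xi = \pm\sqrt r$, and Taylor's theorem in $\xi$ provides a smooth-in-$\xi$ remainder $q$ with $a = qp + b_0 + b_1\xi$. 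Since $a\in\CI_c$ and $r\in W^{1,1}((-2\ep,2\ep);\CI(Y))$, the factors $b_0,b_1$ lie in $W^{1,1}((-2\ep,2\ep);\CI_c(T^*\RR^{d-1}))$ supported in $\{r>0\}$, while $q$ has the analogous regularity with smooth $\xi$-dependence (and decays in $\xi$). Using $H_p p = 0$ one obtains
$$
H_p a = p\cdot H_p q + H_p(b_0 + b_1\xi).
$$

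For the first summand, I would argue $\mu(p\cdot H_p q) = 0$ from the geometry of $\supp\mu$: by Lemma~\ref{l:ellipticU}, $\supp\mu\subset\{p=0\}$, and on $\supp a$ this set is the disjoint union of the two smooth graphs $\{\xi = \pm\sqrt r\}$; restricting $\mu$ to each graph makes $p$ vanish identically, so $p\cdot H_p q$ is $\mu$-almost everywhere zero. For the second summand, a direct computation (expanding $H_p$ in Fermi coordinates and reducing $\xi^2 = p + r$) yields
$$
H_p(b_0 + b_1\xi) = A + B\xi + 2p\,\pa_x b_1,
$$
with $A,B\in L^1((-2\ep,2\ep);L^\infty_{y,\eta})$ supported in $\{r>0\}$; the $p\cdot\pa_x b_1$ piece is handled exactly as above. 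For the $A+B\xi$ part, approximate $b_0,b_1$ by smooth $b_j^\delta\in \CI_c((-\ep,\ep);\CI_c(T^*\RR^{d-1}))$ with $b_j^\delta\to b_j$ in $W^{1,1}((-2\ep,2\ep);\CI(Y))$ and with supports in $\{r>0\}$; Lemma~\ref{l:specialTest} yields $\mu(H_p(b_0^\delta + b_1^\delta\xi))=0$ for each $\delta$, and since the corresponding $A^\delta\to A$, $B^\delta\to B$ in $L^1_x L^\infty_{y,\eta}$, Lemma~\ref{l:L1} permits passage to the limit, giving $\mu(A+B\xi)=0$. Combining, $\mu(H_p a)=0$.

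The principal obstacle is the low ($W^{1,1}_x$) regularity of $b_0,b_1,q$, inherited from $\sqrt r$, which blocks a direct application of Lemma~\ref{l:specialTest} and forces the approximation argument above. The key enabling feature is that Lemma~\ref{l:L1} was set up precisely to accommodate $L^1_x$-dependent symbols linear in $\xi$, which is exactly the structure of the residues produced by the reduction $\xi^2 = p+r$. The subsidiary point—that $\mu$ annihilates $p$ times bounded factors on $\supp a$—is clean provided one uses the two-sheet decomposition of $\mu$, which is available because $\supp\mu\cap\supp a\subset\{\xi = \pm\sqrt r\}$.
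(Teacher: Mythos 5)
Your proof takes essentially the same route as the paper: the interpolants you write down coincide exactly with the paper's $b_e, b_o$ (one checks $b_0=b_e$ and $b_1=b_o$ by substituting $\xi=\pm\sqrt r$ into the even and odd parts $a_e, a_o$), and your identity $H_pa - H_p(b_0+b_1\xi) = pH_pq$ via $H_pp=0$ is algebraically the same as the paper's appeal to tangency of $H_p$ to $\{p=0\}$ followed by $\supp\mu\subset\{p=0\}$. So the decomposition and the key reduction to Lemma~\ref{l:specialTest} are identical; only the bookkeeping differs.

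One place where your write-up is actually \emph{more} careful than the paper's: you observe that $b_0,b_1$ inherit only $W^{1,1}_x$ regularity from $r$ and therefore do not directly satisfy the hypothesis $a_j\in\CI_c((-\ep,\ep);\CI_c(T^*\RR^{d-1}))$ of Lemma~\ref{l:specialTest}, so you interpose a mollification $b_j^\delta\to b_j$ in $W^{1,1}_x$ and pass to the limit via the $L^1_xL^\infty_{y,\eta}$ stability of Lemma~\ref{l:L1}. The paper asserts $b_{e/o}\in\CI_c$, which overlooks the composition with $r(x,\cdot)$; your approximation step is the right way to close this. Two small points to flag in a full write-up: (i) after applying Lemma~\ref{l:specialTest} to $b_j^\delta$ you also need to discard the $2p\,\pa_x b_1^\delta$ piece, but there $\pa_x b_1^\delta$ is smooth so $p\,\pa_x b_1^\delta$ is continuous and the support condition on $\mu$ applies directly; (ii) the assertion $\mu(p\,H_pq)=0$ (and likewise $\mu(2p\,\pa_x b_1)=0$) is the same delicate point the paper uses implicitly in the step ``$\supp\mu\subset\{p=0\}$ implies $\mu(H_pa)=\mu(H_p(b_e+b_o\xi))$''---both rest on the fact, supplied by Lemma~\ref{l:L1}, that on $\{r>0\}$ the measure $\mu$ has an $L^1_xL^\infty_{y,\eta}$-type bound in $x$, so that the $L^1_x$ singularity of $H_pq$ (from $\pa_xr$) does not contribute once multiplied by $p$, which vanishes identically on $\supp\mu$.
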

\begin{proof}
Define
$$
a_e:=\tfrac{1}{2}(a(x,\xi,y,\eta)+a(x,-\xi,y,\eta)),\qquad a_o:=\tfrac{1}{2\xi}(a(x,\xi,y,\eta)-a(x,-\xi,y,\eta)).
$$
Then $a_e,\, a_o\in C_c^\infty(T^*M)$ and both are even in $\xi$. Moreover, 
$$
a=a_e+a_o \xi.
$$
Since $a_e,\,a_o$ are even in $\xi$, there are $\tilde{a}_e,\, \tilde{a}_o\in C_c^\infty(T^*M)$ such that
$$
a_{e/o}(x,\xi,y,\eta)=\tilde{a}_{e/o}(x,\xi^2,y,\eta). 
$$
Finally, put
$$
b_{e/o}(x,y,\eta):=\tilde{a}_{e/o}(x,r(x,y,\eta),y,\eta).
$$
Then, $b_{e/o}\in W^{1,1}((-\e,\e);C_c^\infty(\mathbb{R}^{2n-1}_{\xi,y,\eta}))$ and $b_{e/o}=a_{e/o}$ on $\{p=0\}$.

Now note that by Taylor's theorem (initially treating $r$ as an independent variable),
$$\begin{aligned}
\tilde{a}_{e/o}(x,\xi^2,y,\eta) &= \tilde{a}_{e/o}(x,r(x,y,\eta),y,\eta)+(\xi^2-r(x,y,\eta)) g_{e/o}(x,\xi^2,y,\eta,r(x,y,\eta))\\ &=
b_{e/o}(x,y,\eta)+(\xi^2-r(x,y,\eta)) g_{e/o}(x,\xi^2,y,\eta,r(x,y,\eta)
)
\end{aligned}
$$
with $g_{e/o}$ smooth in all its arguments.  Hence
$$
a=\tilde{a}_e(x,\xi^2,y,\eta)+\xi \tilde{a}_o (x,\xi^2,y,\eta) = b_e(x,y,\eta)+\xi b_o(x,y,\eta)+p g(x,\xi^2,y,\eta,r(x,y,\eta)),
$$



Hence, we have
$$
H_pa|_{p=0}=[H_p(b_e+b_o\xi)]|_{p=0}.
$$

Therefore, since $\supp \mu \subset \{p=0\}$ by Lemma~\ref{l:ellipticU}, this implies
$$
\mu(H_pa)=\mu(H_p(b_e+b_o\xi)),
$$
and hence the lemma follows from Lemma~\ref{l:specialTest}.
\end{proof}

\section{Propagation for $\mathcal{C}^1$ potentials}

We now focus on the simpler case when $V\in \mathcal{C}^1$. In this case, it is not necessary to use special factorization structure, and one can apply directly the standard arguments for invariance of defect measures. Although the results in this section can be obtained from~\cite{BuDeLeRo:22}, we give a simple self contained proof in the semiclassical setting.  This, in conjunction with the elliptic estimate of Lemma~\ref{l:ellipticU}, will establish the propagation estimate of Theorem~\ref{theorem:C1}.

\begin{lemma}
\label{l:commute}
Suppose that $a\in C_c^\infty(T^*M)$. Then there is $C>0$ such that for all $V\in W^{1,\infty}$,
$$
\|[\op(a),V]\|_{L^2\to L^2}\leq Ch\|V\|_{W^{1,\infty}}.
$$
\end{lemma}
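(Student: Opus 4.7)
The strategy is to express the commutator as an integral operator and exploit the Lipschitz regularity of $V$ via a simple kernel manipulation, avoiding any actual symbol calculus involving $V$ (which is unavailable at this level of regularity).

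Since $a\in C_c^\infty(T^*M)$, the first step is to reduce to Euclidean space: using a finite partition of unity subordinate to coordinate charts covering $\pi(\supp a)$, we localize $\op(a)$ (up to $O(h^\infty)_{L^2\to L^2}$ errors that do not affect the claim) to operators of the form $\op(a_j)$ with $a_j\in C_c^\infty(T^*\mathbb{R}^d)$, after which the problem reduces to bounding $\|[\op(a_j),V\circ\kappa_j^{-1}]\|_{L^2\to L^2}$ in a single chart with $V\circ\kappa_j^{-1}\in W^{1,\infty}(\mathbb{R}^d)$. Working in a chart, the Schwartz kernel of $\op(a)$ is
\[
K_a(x,y)=\frac{1}{(2\pi h)^d}\int e^{i(x-y)\cdot\xi/h}\,a(x,\xi)\,d\xi,
\]
and hence the commutator has kernel $K_a(x,y)\bigl(V(y)-V(x)\bigr)$.

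The next step is to convert the Lipschitz difference $V(y)-V(x)$ into an extra factor of $h$. Since $V\in W^{1,\infty}$, Rademacher's theorem gives
\[
V(y)-V(x)=(y-x)\cdot\int_0^1 \nabla V\bigl(x+s(y-x)\bigr)\,ds
\]
for a.e.\ $x,y$, with the integrand bounded a.e.\ by $\|V\|_{W^{1,\infty}}$. Integration by parts in $\xi$ in the oscillatory integral representation of $K_a$ yields the identity
\[
(y-x)K_a(x,y)=ih\,K_{\partial_\xi a}(x,y),
\]
which is the only place the smoothness of $a$ is used. Combining these,
\[
[\op(a),V]u(x)=ih\int_0^1\!\!\int K_{\partial_\xi a}(x,y)\cdot\nabla V\bigl(x+s(y-x)\bigr)\,u(y)\,dy\,ds.
\]

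Finally, I would apply Schur's test to the operator with kernel $|K_{\partial_\xi a}(x,y)|$. Standard stationary-phase/nonstationary-phase bounds give $|K_{\partial_\xi a}(x,y)|\leq C_N h^{-d}\langle(x-y)/h\rangle^{-N}$, so $\sup_x\int|K_{\partial_\xi a}(x,y)|\,dy$ and $\sup_y\int|K_{\partial_\xi a}(x,y)|\,dx$ are both bounded by a constant independent of $h$. Schur's test then yields an $L^2\to L^2$ bound of $C$, and pulling out the prefactor $h\|V\|_{W^{1,\infty}}$ gives the claim. The only mild subtlety is making the a.e.\ identity for $V(y)-V(x)$ rigorous as an operator identity, which is handled by approximating $V$ by mollifications $V_\varepsilon$ with $\|V_\varepsilon\|_{W^{1,\infty}}\leq\|V\|_{W^{1,\infty}}$, applying the above argument to each $V_\varepsilon$ (which gives a uniform bound), and passing to the limit using $V_\varepsilon\to V$ in $L^\infty_{\mathrm{loc}}$ together with $[\op(a),V_\varepsilon]\to[\op(a),V]$ strongly on compactly supported $u$. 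This approximation step is really the only place one needs to be careful, but it is routine.
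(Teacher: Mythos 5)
Your proposal is correct and follows essentially the same route as the paper: localize to a chart via a partition of unity, write the commutator as an integral operator with kernel $K_a(x,y)(V(y)-V(x))$, gain a factor of $h$ by one integration by parts in $\xi$, obtain rapid off-diagonal kernel decay, and close with Schur's test. The one small inefficiency on your side is the detour through Rademacher's theorem plus a mollification argument: the paper instead just bounds $|V(y)-V(x)|/|x-y|\le \|V\|_{W^{1,\infty}}$ pointwise (valid for all $x\neq y$ for Lipschitz $V$, no a.e.\ caveat needed), which removes the approximation step entirely.
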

\begin{proof}
First, observe that we may work locally since for $\chi,\phi\in C^\infty(M)$, with $\supp \chi\cap \supp \phi=\emptyset$, for any $N$, there is $C_N>0$ such that 
$$
\|\chi \op(a)\psi\|_{L^2\to L^2}\leq C_Nh^N,
$$
and hence
$$
\|[\chi\op(a)\psi,V]\|_{L^2\to L^2}\leq Ch\|V\|_{L^\infty}\leq Ch\|V\|_{W^{1,\infty}}.
$$
Therefore,  after decomposing using a partition of unity we may replace $a$ by $\tilde{\chi}\op(a)\chi$ for some $\chi,\tilde{\chi}\in C_c^\infty(M)$ with $\supp \chi\cap \supp (1-\tilde{\chi})=\emptyset$, and $\tilde{\chi}$ supported in a coordinate patch. 

In local coordinates, the kernel of $[\tilde{\chi}\op(a)\chi,V]$ is given in local coordinates by
$$
K(x,y):=\frac{1}{(2\pi h)^d}\int e^{\frac{i}{h}\langle x-y,\xi\rangle}\tilde{\chi}(x)\tilde{a}(x,\xi)\chi(y)(V(y)-V(x))d\xi,
$$
for some $\tilde{a}\in C_c^\infty(T^*\mathbb{R}^d)$. 
Then, integrating by parts once in $\xi$, we obtain
$$
K(x,y):=\frac{1}{i(2\pi h)^d}\int e^{\frac{i}{h}\langle x-y,\xi\rangle}\frac{\langle y-x,h\partial_\xi \tilde{a}(x,\xi)\rangle}{|x-y|^2}\tilde{\chi}(x)\chi(y)(V(y)-V(x))d\xi.
$$
Then, integrating by parts with $L:=\frac{h+\langle x-y,D_\xi\rangle}{h+h^{-1}|x-y|^2}$, we obtain
$$
K(x,y):=\frac{1}{i(2\pi h)^d}\int e^{\frac{i}{h}\langle x-y,\xi\rangle}\Big(\frac{h+\langle y-x,D_\xi\rangle}{h+h^{-1}|x-y|^2}\Big) ^N\frac{\langle y-x,h\partial_\xi a(x,\xi)\rangle}{|x-y|^2}\tilde{\chi}(x)\chi(y)(V(y)-V(x))d\xi.
$$
In particular, 
$$
|K(x,y)|\leq Ch^{1-d}\langle h^{-1}|x-y|\rangle^{-N}\frac{|V(y)-V(x)|}{|x-y|}\leq Ch^{1-d}\langle h^{-1}|x-y|\rangle^{-N}\|V\|_{W^{1,\infty}}
$$
So that
$$
\sup_x\int|K(x,y)|+\sup_y\int|K(x,y)|\leq Ch\|V\|_{W^{1,\infty}}.
$$
The Schur test for $L^2$ boundedness then implies the lemma.
\end{proof}

\begin{lemma}\label{lemma:C1prop}
Let $V\in \mathcal{C}^1$. Then if $u\in L^2(M)$ solves 
$$
\|(-h^2\Delta_g+V)u\|_{L^2}=o(h)_{L^2},\qquad \|u\|_{L^2}\leq C<\infty,
$$
and has defect measure $\mu$. Then for all $a\in C_c^\infty(T^*M)$, 
$$
\mu(H_pa)=0.
$$
\end{lemma}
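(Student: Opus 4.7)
The plan is the standard commutator approach, adapted to a rough $V$ via Lemma~\ref{l:commute}. Since $V$ is real valued, $P$ is formally self-adjoint on $L^2(M)$; combining this with $\|Pu\|_{L^2}=o(h)$ and $\|u\|_{L^2}\leq C$ gives, for any $a \in C_c^\infty(T^*M)$,
$$
\langle [P,\op(a)]u,u\rangle = \langle \op(a)u,Pu\rangle - \langle \op(a)Pu,u\rangle = o(h).
$$
The goal is then to compute $\lim_{h\to 0} ih^{-1}\langle[P,\op(a)]u,u\rangle$ and show it equals $\mu(H_p a)$, which by the above identity must vanish.

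I would split $[P,\op(a)] = [-h^2\Delta_g,\op(a)] + [V,\op(a)]$. The smooth part is immediate: standard symbol calculus gives $ih^{-1}[-h^2\Delta_g,\op(a)] = \op(\{|\xi|_g^2,a\}) + O(h)_{L^2\to L^2}$, and the expectation converges along a pure subsequence to $\mu(\{|\xi|_g^2,a\})$. The work lies in $[V,\op(a)]$. By microlocalizing with a cutoff $\chi \in C_c^\infty(M)$ equal to $1$ near $\pi(\supp a)$ (and using off-diagonal $h^\infty$-smallness of the kernel of $\op(a)$), I reduce to the case of effectively compactly supported $V$, hence $V \in W^{1,\infty}$. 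I then introduce a mollification $V_\epsilon \in C_c^\infty(M)$ with $V_\epsilon \to V$ in $C^1$ on a neighborhood of $\pi(\supp a)$ as $\epsilon\to 0$, constructed by local mollification and a partition of unity. Decompose
$$
ih^{-1}[V,\op(a)] = ih^{-1}[V_\epsilon,\op(a)] + ih^{-1}[V - V_\epsilon,\op(a)] + O(h^\infty)_{L^2 \to L^2}.
$$
Smooth calculus handles the first term: $ih^{-1}[V_\epsilon,\op(a)] = \op(\{V_\epsilon,a\}) + O(h)_{L^2\to L^2}$. For the rough term, Lemma~\ref{l:commute} yields
$$
\|ih^{-1}[V-V_\epsilon,\op(a)]\|_{L^2\to L^2} \leq C\|V - V_\epsilon\|_{W^{1,\infty}},
$$
with $C$ independent of $h$ and $\epsilon$.

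Combining everything and passing to the limit along the pure subsequence gives
$$
0 = \mu(\{|\xi|_g^2,a\}) + \mu(\{V_\epsilon,a\}) + O(\|V - V_\epsilon\|_{W^{1,\infty}}).
$$
The left-hand side is independent of $\epsilon$. Sending $\epsilon \to 0$, the $W^{1,\infty}$ error vanishes, and since $\{V_\epsilon,a\}\to\{V,a\}$ uniformly on the compact set $\supp a$ (by $C^1$-convergence of $V_\epsilon$), $\mu(\{V_\epsilon,a\}) \to \mu(\{V,a\})$. Because $H_p a = \{|\xi|_g^2,a\} + \{V,a\}$, I conclude $\mu(H_p a) = 0$.

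The main obstacle is the commutator with $V$, which has only $\mathcal{C}^1$ regularity: standard symbol calculus neither identifies a principal symbol nor controls the error. Lemma~\ref{l:commute} is exactly tuned to this situation, turning $W^{1,\infty}$-smallness of the mollification error into $O(h)$ operator-norm smallness, which is precisely what is needed to pass $V_\epsilon \to V$ inside the commutator without disturbing the leading-order term $\op(\{V,a\})$ in the pure-sequence limit.
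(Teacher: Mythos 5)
Your proof is correct and follows essentially the same route as the paper: mollify $V$ to $V_\epsilon$, apply Lemma~\ref{l:commute} to control $ih^{-1}[V-V_\epsilon,\op(a)]$ in operator norm by $\|V-V_\epsilon\|_{W^{1,\infty}}$, compute the smooth commutator's defect-measure limit, and send $\epsilon\to 0$. The only (cosmetic) differences are that you split off $[-h^2\Delta_g,\op(a)]$ separately rather than keeping $-h^2\Delta_g+V_\epsilon$ together, you derive $\langle[P,\op(a)]u,u\rangle=o(h)$ directly from self-adjointness without restricting to real $a$, and you add a local cutoff on $V$ — a harmless extra care that handles noncompact $M$ where a global $W^{1,\infty}$ mollification estimate would not be available.
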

\begin{proof}
 Let $\{\chi_i\}_{i=1}^N\subset C_c^\infty(M)$ be a partition of unity on $M$ with $\supp \chi_i\subset U_i$ and $(\psi_i:U_i\to \mathbb{R}^d,U_i)$ a coordinate system on $M$. Then, let $\phi \in C_c^\infty(\mathbb{R}^d)$ with $\int \phi=1$, define $\phi_\e(x):=\e^{-d}\phi(\e^{-1}x)$, and put 
$$
V_\e:= \sum_i\big[(\chi_i V)\circ \psi_i^{-1}*\phi_\e\big]\circ \psi_i
$$
Then,
$$
\| V_{\e}\|_{\mathcal{C}^1}\leq C,\qquad \lim_{\e\to 0}\|V_\e-V\|_{W^{1,\infty}}=0.
$$

Let $p_{\e}=|\xi|_g^2+V_{\e}$. Then for $a\in C_c^\infty(T^*M)$ real valued, we have
\begin{align*}
0&=\lim_{h\to 0}2h^{-1}\Im \langle Pu,\op(a)u\rangle \\
&=\lim_{h\to 0}-ih^{-1}(\langle \op(a)^*Pu,u\rangle-\langle P\op(a)u,u\rangle\\
&=\lim_{h\to 0}-ih^{-1}(\langle \op(a)Pu,u\rangle-\langle P\op(a)u,u\rangle\\
&=\lim_{h\to 0}ih^{-1}\langle [P,\op(a)]u,u\rangle\\
&=\lim_{\e \to 0}\lim_{h\to 0}ih^{-1}\langle [-h^2\Delta_g+V_{\e},\op(a)]u,u\rangle+ih^{-1}\langle [V-V_{\e},\op(a)]u,u\rangle.
\end{align*}
Notice that
$$
\lim_{h\to 0}ih^{-1}\langle [-h^2\Delta_g+V_{\e},\op(a)]u,u\rangle=\lim_{h\to 0}\langle \op(H_{p_{\e}})u,u\rangle=\mu(H_{p_\e}).
$$
For the second term, observe that by Lemma~\ref{l:commute}
$$
|ih^{-1}\langle [V-V_{\e},\op(a)]u,u\rangle|\leq C\|V-V_\e\|_{W^{1,\infty}},
$$
and hence 
$$
\lim_{\e\to 0}\lim_{h\to 0}ih^{-1}\langle [V-V_{\e},\op(a)]u,u\rangle=0.
$$
All together, we have shown that 
$$
0=\lim_{\e\to 0}\mu(H_{p_\e}a)
$$

On the other hand, by dominated convergence, 
$$ 
\lim_{\e\to 0}\mu(H_{p_\e}a)=\mu(H_pa),
$$
which completes the proof.
\end{proof}

      \bibliographystyle{abbrv} 
\bibliography{references.bib}

\end{document}